\documentclass[a4paper,10pt]{article}
\usepackage{stmaryrd}
\usepackage{amsfonts}
\usepackage{bbm}
\usepackage{amscd}
\usepackage{mathrsfs}
\usepackage{latexsym,amssymb,amsmath,amscd,amscd,amsthm,amsxtra,xypic}
\usepackage[dvips]{graphicx}
\usepackage[utf8]{inputenc}
\usepackage[T1]{fontenc}
\usepackage{lmodern}
\usepackage{amssymb}
\usepackage[all]{xy}
\usepackage{nicefrac,mathtools,enumitem}
\usepackage{microtype}

\textwidth 15cm \textheight 20cm \oddsidemargin .1truein

\newtheorem{thm}{Theorem}[section]
\newtheorem{lem}[thm]{Lemma}\newtheorem{lemma}[thm]{Lemma}
\newtheorem{cor}[thm]{Corollary}
\newtheorem{pro}[thm]{Proposition}

\newtheorem{rmk}[thm]{Remark}\newtheorem{remark}[thm]{Remark}

\newtheorem{defi}[thm]{Definition}

\setlength{\baselineskip}{1.8\baselineskip}

\newcommand{\be }{\begin{equation}}
\newcommand{\ee }{\end{equation}}

\newcommand{\defbe}{\triangleq}
\newcommand{\pf}{\noindent{\bf Proof.}\ }

\newcommand{\R}{\mathbb R}\newcommand{\Z}{\mathbb Z}
\newcommand{\Real}{\mathbb R}



\newcommand{\huaC}{{\mathcal{C}}}

\newcommand{\huaK}{\mathcal{K}}

\newcommand{\huaT}{\mathcal{T}}

\newcommand{\CWM}{C^{\infty}(M)}

\newcommand{\set}[1]{\left\{#1\right\}}

\newcommand{\frkd}{\mathfrak d}

\newcommand{\frkg}{\mathfrak g}\newcommand{\g}{\mathfrak g}

\newcommand{\frkk}{\mathfrak k}

\newcommand{\frkr}{\mathfrak r}

\newcommand{\frkt}{\mathfrak t}

\newcommand{\frky}{\mathfrak y}

\newcommand{\frkL}{\mathfrak L}

\newcommand{\frkX}{\mathfrak X}

\def\qed{\hfill ~\vrule height6pt width6pt depth0pt}

\newcommand{\half}{\frac{1}{2}}

\newcommand{\pairing}[1]{\left\langle #1\right\rangle}
\newcommand{\ppairingE}[1]{\left ( #1\right )_E}

\newcommand{\conpairing}[1]{\left\langle  #1\right\rangle }
\newcommand{\Courant}[1]{\left\llbracket  #1\right\rrbracket }

\newcommand{\Lied}{\frkL}
\newcommand{\jet}{\mathfrak{J}}

\newcommand{\jetd}{\mathbbm{d}}
\newcommand{\dev}{\mathfrak{D}}

\newcommand{\pie}{^\prime}
\newcommand{\Id}{\rm{Id}}

\newcommand{\e}{\mathbbm{e}}
\newcommand{\p}{\mathbbm{p}}
\newcommand{\id}{\mathbbm{i}}

\newcommand{\dM}{\mathrm{d}}

\newcommand{\omni}{\mathcal{E}}
\newcommand{\omnirho}{\rho_{\varepsilon}}

\newcommand{\Hom}{\mathrm{Hom}}

\newcommand{\Ad}{\mathrm{Ad}}
\newcommand{\Aut}{\mathrm{Aut}}

\newcommand{\gl}{\mathfrak {gl}}

\newcommand{\Ker}{\mathrm{Ker}}

\newcommand{\End}{\mathrm{End}}
\newcommand{\ad}{\mathrm{ad}}

\newcommand{\inv}{\mathrm{inv}}

\newcommand{\ve}{\mathrm{v}}
\newcommand{\h}{\mathrm{h}}
\newcommand{\sgn}{\mathrm{sgn}}
\newcommand{\Ksgn}{\mathrm{Ksgn}}

\begin{document}
\title{
{Semidirect products of representations up to homotopy
\thanks
 {Supported by the German Research Foundation 
(Deutsche Forschungsgemeinschaft (DFG)) through 
the Institutional Strategy of the University of G\"ottingen
 }
} }
\author{Yunhe Sheng  \\
Department of Mathematics, Jilin University,
 Changchun 130012, Jilin, China
\\\vspace{3mm}
email: ysheng888@gmail.com\\
Chenchang Zhu\\
Courant Research Centre ``Higher Order Structures'', 
University of G\"ottingen\\
email:zhu@uni-math.gwdg.de}
\date{}
\footnotetext{{\it{Keyword}:  representation up to homotopy,
$L_\infty$-algebra,   integration, DGLA, Lie 2-algebras, Courant algebroids}}

\footnotetext{{\it{MSC}}: Primary 17B65. Secondary 18B40, 58H05.}

\maketitle
\begin{abstract}
We study the semidirect product of a Lie algebra  with a
representation up to homotopy and provide various examples coming
from Courant algebroids, string Lie 2-algebras, and omni-Lie
algebroids. In the end, we study the semidirect product of a Lie
group with a representation up to homotopy and use it to give an
integration of a certain string Lie 2-algebra.  \tableofcontents
\end{abstract}
\section{Introduction}

This paper is the first part of our project to integrate
representations up to homotopy of Lie algebras (algebroids). Our
original motivation is to integrate the standard Courant algebroid
$TM\oplus T^*M$ since it is this Courant algebroid that is much used
in Hitchin and Gualtieri's program of generalized complex geometry.
Courant algebroids are Lie 2-algebroids in the sense of Roytenberg
and \v{S}evera \cite{royt, s:funny}. The general procedure to
integrate Lie $n$-algebras (algebroids) is already described in
\cite{getzler, henriques, s:funny}. We want to pursue some explicit formulas
for the special case of the  standard Courant algebroid. It turns
out that the sections of the Courant algebroid $TM \oplus T^*M$ form
a semidirect product of a Lie algebra with a representation up to
homotopy.   Abad and Crainic \cite{abad-crainic:rep-homotopy}
recently studied the representations up to homotopy of Lie algebras,
Lie groups, and even Lie algebroids, Lie groupoids, in general. Just
as one can form the semidirect product of a Lie algebra with a
representation, one can form the semidirect product with
representations up to homotopy too. In our case, the semidirect
product coming from the standard Courant algebra is a Lie 2-algebra.
But using the fact that it is also a semidirect product, the
integration becomes easier. The integration result is related to the
semidirect product of Lie groups with its representation up to
homotopy, which will be discussed in Section \ref{sec:gp}. However
it turns out that the concept of representation up to homotopy of
Lie groups of Abad and Crainic will not be general enough to cover
all the integration results. This we will continue in a forthcoming
paper \cite{sheng-zhu:II}.

In this paper we focus on exhibiting more examples of representation
up to homotopy and  their semidirect products to demonstrate the
importance of our integration procedure. The examples are all sorts
of variations of Courant algebroids. One is Chen and Liu's omni-Lie
algebroids, which generalizes Weinstein's omni-Lie algebras. Hence
we expect to give an integration to Weinstein's omni-Lie algebras
via Lie 2-algebras in the next paper \cite{sheng-zhu:II}.

Another example comes from the so called string Lie 2-algebra. It is
essentially  a Courant algebroid over a point (see Section
\ref{sec:string}),  namely  a Lie algebra with an adjoint-invariant
inner product. This sort of Lie algebra is usually called a {\em
quadratic Lie algebra}. This concept also appears in the context of
Manin triples and double Lie algebras. The example  $\R \to \g
\oplus \g^*$ that we consider in this paper is an analogue of the
standard Courant algebroid, and is basically a special case taken
from \cite{lu-weinstein} \footnote{private conversation
  with Jiang-Hua Lu}. We give an integration of
the string Lie 2-algebra $\R \to \g \oplus \g^*$  at the end.

Usually people require the base Lie algebra of a string Lie algebra
to be semisimple and of compact type (see Remark
\ref{rk:semisimple}). For such usual sort of string Lie 2-algebras,
Baez et al \cite{baez:2gp} have proved a no-go theorem, namely such
string Lie 2-algebras can not be integrated to finite dimensional
semi-strict Lie 2-groups. Here a {\em semi-strict Lie 2-group} is a
group object in $\rm DiffCat$, where  $\rm DiffCat$ is the
2-category consisting of categories, functors, and natural
transformations in the category of differential manifolds, or
equivalently $\rm DiffCat$ is a 2-category with Lie groupoids as
objects, strict morphisms of Lie groupoids as morphisms, and
2-morphisms of Lie groupoids as 2-morphisms.  Our semi-strict Lie
2-group is actually called a Lie 2-group by the authors in
\cite{baez:2gp}. However, we call it a semi-strict Lie 2-group
because  compared to the Lie 2-group in the sense of Henriques
\cite{henriques}, or equivalently the stacky group in the sense of
Blohmann \cite{blohmann},  it is stricter. Basically,  their Lie
2-group is a group object in the 2-category with objects as Lie
groupoids, morphisms as Hilsum-Skandalas bimodules (or generalized
morphisms), 2-morphisms as 2-morphisms of Lie groupoids.
Schommer-Pries realizes the string 2-group as such a Lie 2-group
with a finite dimensional model \cite{schommer:string-finite-dim}
and the integration of a string Lie 2-algebra to such a model is a
work in progress \cite{ccc:integration-string}.

It is not needed in the
definition of the string Lie 2-algebra for the base Lie algebra to be
semisimple of compact type. One only needs a quadratic Lie algebra.
As soon as we relax this condition on compactness, we find out that
one can integrate  $\R \to \g \oplus \g^*$ to a finite dimensional
semi-strict Lie
2-group in the sense of Baez et al. The integrating object is actually
a special Lie 2-group (very close to a strict Lie 2-group) in the
sense of Baez et al.

Then of course, as we relax the
condition, we are in danger that the class corresponding to this
Lie 2-algebra in $H^3(\g\oplus \g^*, \R)$ might be trivial, and
consequently our Lie 2-algebra might be strict. Then what
we have done
would not have been a big surprise  because  a strict
Lie 2-algebra corresponds to a crossed module of Lie algebras, and it
easily integrates to a strict Lie 2-group by integrating the crossed module.  However,
we verified that when $\g$ itself (not $\g \oplus \g^*$) is semisimple, this Lie 2-algebra is not strict.

{\bf Acknowledgement:} We give warmest thanks to  Zhang-Ju Liu,
Jiang-Hua Lu, Giorgio Trentinagia and Marco Zambon for
useful comments and discussion. Y. Sheng gives his warmest thanks to Courant Research Centre ``Higher Order Structures'', G\"{o}ttingen University, where this work was
done when he visited there.

\section{Representations up to homotopy of Lie algebras}
In this section, we first consider the 2-term representation up to
homotopy of Lie algebras. We give explicit formulas of the
corresponding 2-term $L_\infty$-algebra, which is their semidirect
product. Then we give several interesting examples including Courant
algebroids and omni-Lie algebroids.

\subsection{Representation up to homotopy of Lie algebras and their semidirect products}

$L_\infty$-algebras,  sometimes  called strongly homotopy Lie
algebras,  were introduced by Drinfeld and Stasheff
\cite{stasheff:shla} as a model for ``Lie algebras that satisfy
Jacobi up to all higher homotopies''. The following convention of
$L_\infty$-algebras has the same grading as in \cite{henriques} and
\cite{rw}.

\begin{defi}
An $L_\infty$-algebra is a graded  vector space $L=L_0\oplus
L_1\oplus\cdots$ equipped with a system $\{l_k|~1\leq k<\infty\}$ of
linear maps $l_k:\wedge^kL\longrightarrow L$ with degree
$\deg(l_k)=k-2$, where the exterior powers are interpreted in the
graded sense and the following relation with Koszul sign ``Ksgn'' is
satisfied for all $n\geq0$:
\begin{equation}
\sum_{i+j=n+1}(-1)^{i(j-1)}\sum_{\sigma}\sgn(\sigma)\Ksgn(\sigma)l_j(l_i(x_{\sigma(1)},\cdots,x_{\sigma(i)}),x_{\sigma(i+1)},\cdots,x_{\sigma(n)})=0,
\end{equation}
where the summation is taken over all $(i,n-i)$-unshuffles with
$i\geq1$.
\end{defi}

Let $n=1$, we have
$$
l_1^2=0,\quad l_1:L_{i+1}\longrightarrow L_i,
$$
which means that $L$ is a complex and we usually write $\dM=l_1$.
Let $n=2$, we have
$$
\dM l_2(x,y)=l_2(\dM x,y)+(-1)^pl_2(x,\dM y),\quad \forall~x\in L_p,
y\in L_q,
$$
which means that $\dM $ is a derivation with respect to $l_2$. We
usually view $l_2$ as a bracket $[\cdot,\cdot]$. However, it is not
a Lie bracket, the obstruction of Jacobi identity is controlled by
$l_3$: \begin{eqnarray} \nonumber
&&l_2(l_2(x,y),z)+(-1)^{(p+q)r}l_2(l_2(y,z),x)+(-1)^{qr+1}l_2(l_2(x,z),y)\\
&&=-\dM l_3(x,y,z)-l_3(\dM x,y,z)+(-1)^{pq}l_3(\dM
y,x,z)-(-1)^{(p+q)r}l_3(\dM z, x,y),
\end{eqnarray}
where $x\in L_p,~ y\in L_q,~z\in L_q$ and $l_3$ also satisfies
higher coherence laws.

In particular, if the $k$-ary brackets are zero for all $k>2$, we
recover the usual notion of {\bf differential graded Lie algebras}
(DGLA). If $L$ is concentrated in degrees $<n$, $L$ is called an
 {\bf $n$-term $L_\infty$-algebra}.

In this paper, we mainly consider
 2-term $L_\infty$-algebras, which are equivalent  to Lie 2-algebras
 given in \cite{baez:2algebras} by John Baez and Alissa Crans.   In
this special case, $l_4$ is always zero. Thus restricting the coherence
law satisfied by $l_3$ on degree-0, we obtain
\begin{equation}
l_3(l_2(x,y),z,w)+c.p.-\big(l_2(l_3(x,y,z),w)+c.p.\big)=0,\quad\forall~x,y,z,w\in
L_0.
\end{equation} Lie 2-algebras are categorified
 version of Lie algebras. In a Lie 2-algebra, the Jacobi identity
 is replaced by an isomorphism which is called the {\bf
 Jacobiator}. The Jacobiator  satisfies a certain law of its own.  Given a 2-term
 $L_\infty$-algebra $L_1\stackrel{\dM}{\longrightarrow}L_0$, the
 underlying 2-vector space of the corresponding Lie 2-algebra is
made up by $L_0$  as the vector space of objects and $L_0\oplus
 L_1$ as the vector space of morphism. Please see \cite[Theorem
 4.3.6]{baez:2algebras} for more details.

Recall from \cite{baez:2algebras} that a Lie 2-algebra is skeletal
if isomorphic objects are equal. Viewing a Lie 2-algebra as a 2-term
$L_\infty$-algebra, explicitly we have
 \begin{defi}\label{defi:skeletal L 2}
A 2-term $L_\infty$-algebra $L_1\stackrel{\dM}{\longrightarrow}L_0$
is called {\bf skeletal} if $\dM=0$.
 \end{defi}

They also prove the following theorem
\begin{thm}{\rm\cite{baez:2algebras}}\label{thm:skeletal Lie 2 a}  There is a
 one-to-one correspondence between 2-term skeletal $L_\infty$-algebra
 $L_1\stackrel{\dM}{\longrightarrow}L_0$ and quadruples
 $(\frkk_1,\frkk_2,\phi,\theta)$ where $\frkk_1$ is a Lie algebra, $\frkk_2$ is a
 vector space, $\phi$ is  a representation of $\frkk_1$ on $\frkk_2$ and
 $\theta$ is a 3-cocycle on $\frkk_1$ with values in $\frkk_2$.\end{thm}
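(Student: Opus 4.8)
The plan is to read the four pieces of data $(\frkk_1,\frkk_2,\phi,\theta)$ off the $L_\infty$ structure using the grading, and then to translate each $L_\infty$ relation into a classical axiom. Since skeletal means $\dM = l_1 = 0$, the graded vector space $L = L_0 \oplus L_1$ carries only $l_2$ (of degree $0$) and $l_3$ (of degree $1$); all $l_k$ with $k \geq 4$ vanish automatically, since their lowest-degree input $\wedge^k L_0$ has total degree $0$ while $\deg l_k = k-2 \geq 2$ forces the image into $L_{k-2} = 0$. By the same degree count, $l_2$ has three potentially nonzero components, $l_2|_{\wedge^2 L_0}\colon \wedge^2 L_0 \to L_0$ and $l_2|_{L_0 \wedge L_1}\colon L_0 \wedge L_1 \to L_1$, together with $l_2|_{\wedge^2 L_1}$, which lands in $L_2 = 0$ and hence vanishes; likewise $l_3$ is potentially nonzero only on $\wedge^3 L_0$, where it lands in $L_1$. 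Accordingly I set $\frkk_1 := L_0$ with bracket $[x,y] := l_2(x,y)$, $\frkk_2 := L_1$, $\phi(x)m := l_2(x,m)$, and $\theta := l_3|_{\wedge^3 L_0} \in \Hom(\wedge^3\frkk_1,\frkk_2)$.

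Next I would feed homogeneous inputs into the two nontrivial relations. Because $\dM = 0$, the entire right-hand side of the $n=3$ (Jacobiator) relation vanishes. Restricting it to three inputs in $L_0$ leaves exactly the Jacobi identity for $[\cdot,\cdot]$, so $\frkk_1$ is a Lie algebra; restricting it instead to two inputs in $L_0$ and one in $L_1$, and using the graded skew-symmetry $l_2(m,x) = -l_2(x,m)$ to move the $L_1$-entry into second position, yields $\phi([x,y]) = \phi(x)\phi(y) - \phi(y)\phi(x)$, so that $\phi\colon \frkk_1 \to \gl(\frkk_2)$ is a representation. Every remaining degree distribution collapses to $0 = 0$ because $l_2(\frkk_2,\frkk_2) = 0$.

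The heart of the argument is the $n=4$ coherence law for $l_3$ displayed above. With all four inputs in $L_0$, the terms $l_3(l_2(\cdot,\cdot),\cdot,\cdot)$ keep $l_3$ on three degree-$0$ arguments (landing in $\frkk_2$), while the terms $l_2(l_3(\cdot,\cdot,\cdot),\cdot)$ apply $\phi$ of the remaining argument to $\theta$ of the other three. Matching the unshuffle signs $\sgn(\sigma)\Ksgn(\sigma)$ against the signs of the Chevalley--Eilenberg differential $\delta_\phi$ of $\frkk_1$ with values in $\frkk_2$, this equation becomes precisely the cocycle condition $\delta_\phi\theta = 0$, so $\theta$ is a $3$-cocycle. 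Any distribution with an input in $L_1$ makes every term vanish, since an $l_3$ with a degree-$1$ slot maps into $L_2 = 0$ and $l_2(\frkk_2,\frkk_2) = 0$; thus no further constraint appears. This sign-matching is where I expect the genuine work to lie; everything else is forced by the grading.

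Finally, for the reverse direction I would run the construction backwards: given $(\frkk_1,\frkk_2,\phi,\theta)$, set $L_0 = \frkk_1$, $L_1 = \frkk_2$, $\dM = 0$, and define $l_2$ and $l_3$ by the formulas above, extended by graded skew-symmetry and taken to be zero on all other components, with $l_k = 0$ for $k \geq 4$. The same degree-by-degree bookkeeping shows that the Jacobiator relation follows from the Jacobi identity together with the representation property, and that the coherence law follows from $\delta_\phi\theta = 0$, the higher relations being vacuous for degree reasons. Since the two assignments are manifestly mutually inverse, this establishes the claimed one-to-one correspondence.
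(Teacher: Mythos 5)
Your argument is correct and is essentially the standard Baez--Crans proof: the paper itself gives no proof of this theorem (it is quoted from \cite{baez:2algebras}), offering only the one-line dictionary $\frkk_1=L_0$, $\frkk_2=L_1$, $\phi$ from $l_2$, $\theta$ from $l_3$, which is exactly the identification you make. Your degree bookkeeping, the reduction of the $n=3$ relation to the Jacobi identity and the representation property, and of the $n=4$ coherence law to the Chevalley--Eilenberg cocycle condition (the paper's displayed equation $l_3(l_2(x,y),z,w)+c.p.-(l_2(l_3(x,y,z),w)+c.p.)=0$) all check out.
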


We briefly recall that given a 2-term skeletal $L_\infty$-algebra
 $L_1\stackrel{\dM}{\longrightarrow}L_0$,
 $\frkk_1$ is $L_0$, $\frkk_2$ is $L_1$, the representation $\phi$ comes from $l_2$
 and the 3-cocycle $\theta$ is obtained from $l_3$.

 Please see \cite{abad-crainic:rep-homotopy, abad} for
the general theory of  representation up to homotopy of Lie
algebroids. In this paper we only consider the 2-term representation
up to homotopy of Lie algebras.
\begin{defi}{\rm\cite{abad-crainic:rep-homotopy}}
A 2-term representation up to homotopy of a Lie algebra $\frkg$
consists of
\begin{itemize}
\item[\rm 1.] A 2-term complex of vector spaces
$V_1\stackrel{\dM}{\longrightarrow}V_0$.

\item[\rm 2.] Two linear maps $\mu_i: \g \to End(V_i)$, which are compatible with $\dM$, i.e. for
any $X\in\frkg,~\xi\in V_1$, we have
\begin{equation}\label{eqn:d mu}
\dM\circ\mu_1(X)(\xi)=\mu_0(X)\circ\dM(\xi).
\end{equation}

\item[\rm 3.] A linear map $\nu:\wedge^2\frkg\to\Hom(V_0,V_1)$ such that
\begin{eqnarray}
\label{eqn:mu0}\mu_0[X_1,X_2]-[\mu_0(X_1),\mu_0(X_2)]&=&\dM\circ
\nu(X_1,X_2),\\
\label{eqn:mu1}\mu_1[X_1,X_2]-[\mu_1(X_1),\mu_1(X_2)]&=&
\nu(X_1,X_2)\circ\dM,
\end{eqnarray}
as well as
\begin{equation}\label{eqn:d k}
[\mu_0(X_1) + \mu_1(X_1),\nu(X_2,X_3)]+c.p.= \nu([X_1,X_2],X_3)+c.p.,
\end{equation}
where $c.p.$ stands for  cyclic permutation.
\end{itemize}
We usually write $\mu=\mu_0+\mu_1$ and denote a 2-term
representation up to homotopy of a Lie algebra $\frkg$ by
$(V_1\stackrel{\dM}{\longrightarrow}V_0,\mu,\nu)$.
\end{defi}

In \cite[Example 3.25]{abad-crainic:rep-homotopy}, the authors
proved that associated to any representation up to homotopy
$V_\bullet$ of a Lie algebra $\frkg$, one can form a new $L_\infty
$-algebra $\frkg\ltimes V_\bullet$, which is their semidirect
product. Here we make this construction explicit in the 2-term case.

Let $(V_1\stackrel{\dM}{\longrightarrow}V_0,\mu,\nu)$ be a 2-term
representation up to homotopy of $\frkg$, then we can form a new
2-term complex
$$
(\frkg\ltimes
V_\bullet,\dM):V_1\stackrel{\dM}{\longrightarrow}(\frkg\oplus V_0).
$$
Define  $l_2:\wedge^2(\frkg\ltimes
V_\bullet)\longrightarrow\frkg\ltimes V_\bullet $ by setting
\begin{equation}\label{2bracket}
\left\{\begin{array}{l}l_2(X+\xi,Y+\eta)=[X,Y]+\mu_0(X)(\eta)-\mu_0(Y)(\xi),\\
l_2(X+\xi,f)=\mu_1(X)(f),\\
l_2(f,g)=0,\end{array}\right.
\end{equation}
for any $X+\xi,Y+\eta\in\frkg\oplus V_0$ and $f,~g\in V_1$.  One
should note that $l_2$ is not a Lie bracket, but we have
$$
l_2(l_2(X+\xi,Y+\eta),Z+\gamma)+c.p.=\dM(\nu(X,Y)(\gamma))+c.p..
$$
Define $l_3:\wedge^3(\frkg\ltimes
V_\bullet)\longrightarrow\frkg\ltimes V_\bullet $ by setting:
\begin{equation}\label{3bracket}
l_3(X+\xi,Y+\eta,Z+\gamma)=-\nu(X,Y)(\gamma)+c.p.,
\end{equation} then we have
\begin{pro}\label{pro:Lie 2}
With the above notation, if
$(V_1\stackrel{\dM}{\longrightarrow}V_0,\mu,\nu)$ is a 2-term
representation up to homotopy of a Lie algebra $\frkg$, then
$(V_1\stackrel{\dM}{\longrightarrow}(\frkg\oplus V_0),l_2,l_3)$ is a
2-term $L_\infty$-algebra.
\end{pro}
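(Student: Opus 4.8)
The plan is to verify directly the defining relations of a 2-term $L_\infty$-algebra for $(V_1\stackrel{\dM}{\longrightarrow}(\frkg\oplus V_0),l_2,l_3)$. Since the complex sits in degrees $0$ and $1$ and $l_k=0$ for $k\geq 4$, only the relations for $n=1,2,3,4$ can carry content; for $n\geq 5$ they hold vacuously, the sole surviving term for $n=5$ being of type $l_3(l_3(\cdots),-,-)$, which vanishes because $l_3$ of three degree-$0$ inputs has degree $1$ and feeding a degree-$1$ element back into $l_3$ forces the output into degree $\geq 2=0$. First I would record that $l_2$ and $l_3$ are genuinely graded-antisymmetric: for $l_2$ this follows from the antisymmetry of $[\cdot,\cdot]$ on $\frkg$ together with the sign convention relating $l_2(X+\xi,f)$ to $l_2(f,X+\xi)$, while $l_3$, being the cyclic sum of $-\nu(X,Y)(\gamma)$, inherits total antisymmetry from $\nu\in\Hom(\wedge^2\frkg,\Hom(V_0,V_1))$. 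The relation for $n=1$, namely $l_1^2=\dM^2=0$, is automatic, as $\dM$ lands in degree $0$ and there is nothing in degree $-1$.

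Next I would treat the $n=2$ relation, that $\dM$ is a derivation of $l_2$. A short case analysis by the degrees of the two arguments shows that the only nontrivial case pairs an element $X+\xi\in\frkg\oplus V_0$ with an element $f\in V_1$; unwinding $\dM l_2(X+\xi,f)=l_2(X+\xi,\dM f)$ gives precisely $\dM\circ\mu_1(X)=\mu_0(X)\circ\dM$, which is \eqref{eqn:d mu}. For $n=3$ I would again split according to how many arguments lie in $V_1$. When all three lie in $\frkg\oplus V_0$, the cyclic Jacobiator sum on the left collapses to $\dM(\nu(X,Y)(\gamma))+c.p.$, exactly the identity recorded just after \eqref{2bracket}: the $\frkg$-components cancel by the Jacobi identity of $\frkg$, and the $V_0$-components reassemble into $\dM\circ\nu$ by \eqref{eqn:mu0}; by the definition \eqref{3bracket} this is $-\dM l_3$. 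When exactly one argument lies in $V_1$, the left side becomes $\big(\mu_1([X,Y])-[\mu_1(X),\mu_1(Y)]\big)(f)$, while on the right only the term $-l_3(\dM z,x,y)$ survives and equals $\nu(X,Y)(\dM f)$; equality is then exactly \eqref{eqn:mu1}. When two or three arguments lie in $V_1$, both sides vanish for degree reasons, since $l_2$ of two degree-$1$ elements is zero and $l_3$ with any degree-$1$ input has output in degree $\geq 2$.

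The main obstacle is the $n=4$ coherence law for $l_3$ on degree-$0$ arguments, namely $l_3(l_2(x,y),z,w)+c.p.=l_2(l_3(x,y,z),w)+c.p.$ Writing $x=X_1+\xi_1,\ldots,w=X_4+\xi_4$, I would expand both cyclic sums and organize the resulting terms by separating those in which $\nu$ is evaluated on a bare $V_0$-component $\xi_i$ from those in which $\mu_0$ has first acted on that component or $\mu_1$ has acted on the output of $\nu$. The bracket terms of the first type assemble into $\nu([X_i,X_j],X_k)+c.p.$, and the terms of the second type assemble into $[\mu_0(X_i)+\mu_1(X_i),\nu(X_j,X_k)]+c.p.$, where the graded commutator is read as $\mu_1\circ\nu-\nu\circ\mu_0$ since $\nu$ sends $V_0$ to $V_1$; these are precisely the two sides of \eqref{eqn:d k}. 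The delicate point, and where I expect to spend the effort, is the bookkeeping of the nested cyclic sums: the inner cyclic structure built into $l_3$ must be combined with the outer four-fold permutation so that, after fixing the element whose $V_0$-component $\nu$ ultimately acts on and summing cyclically over the remaining three $\frkg$-arguments, the four-element identity collapses onto the three-element identity \eqref{eqn:d k}. Once this reindexing is carried out and the contributions are matched term by term, the coherence law follows and the proof is complete.
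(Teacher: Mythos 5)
Your verification is sound, but be aware that the paper does not actually prove Proposition \ref{pro:Lie 2}: the general statement that $\frkg\ltimes V_\bullet$ is an $L_\infty$-algebra is delegated to Abad and Crainic (Example 3.25 of their paper on representations up to homotopy), and the text only records the explicit formulas \eqref{2bracket} and \eqref{3bracket} in the 2-term case. So your direct case-by-case check supplies a proof the paper omits rather than reproducing one. The reduction you describe is exactly how the axioms of a 2-term representation up to homotopy are meant to be consumed: the $n=2$ relation is \eqref{eqn:d mu}; the $n=3$ relation on three degree-$0$ arguments follows from the Jacobi identity of $\frkg$ together with \eqref{eqn:mu0} (this is the identity displayed just after \eqref{2bracket}), and with one $V_1$-argument it is \eqref{eqn:mu1}; the $n=4$ coherence law on degree-$0$ arguments regroups into \eqref{eqn:d k} applied slot by slot. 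Two points to nail down when writing it out: (i) the $n=4$ relation with a $V_1$-argument must also be dismissed, though every term there vanishes because $l_3$ with a degree-$1$ input and $l_2$ of two degree-$1$ inputs are both zero; (ii) the ``$+\,c.p.$'' in the degree-$0$ coherence law is shorthand for the sum over all $(2,2)$- and $(3,1)$-unshuffles of $x,y,z,w$ (six and four terms respectively), not a literal four-fold cyclic sum --- with that reading, fixing the argument whose $V_0$-component $\nu$ ultimately acts on leaves precisely a cyclic sum over the remaining three $\frkg$-components, and each such group is an instance of \eqref{eqn:d k}, as you anticipated.
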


\subsection{Example I: Courant algebroids $TM\oplus T^*M$}

Courant algebroids were first introduced in \cite{lwx} to study the
double of Lie bialgebroids. It is a vector bundle $E\longrightarrow
M$ equipped with a nondegenerate symmetric bilinear form
$\pairing{\cdot,\cdot}$ on the bundle, an antisymmetric bracket
$\Courant{\cdot,\cdot}$ on the section space $\Gamma(E)$ and a
bundle map $\rho:E\longrightarrow TM$ such that a set of axioms are
satisfied. It can be viewed \cite{royt} as a Lie 2-algebroid with a
``degree 2 symplectic form''. The first example  is the standard
Courant algebroid $(\huaT=TM\oplus
T^*M,\pairing{\cdot,\cdot},\Courant{\cdot,\cdot},\rho)$ associated
to a manifold $M$, where $\rho:\huaT\longrightarrow TM$ is the
projection, the canonical pairing $\pairing{\cdot,\cdot}$ is given
by
\begin{equation}\label{eqn:pair}
\pairing{X+\xi,Y+\eta}=\frac{1}{2}\big(\xi(Y)+\eta(X)\big),\quad\forall
~X,Y\in\frkX(M),~\xi,\eta\in\Omega^1(M),
\end{equation}
the antisymmetric bracket $\Courant{\cdot,\cdot}$ is given by
\begin{equation}\label{eqn:bracket}
\Courant{X+\xi,Y+\eta}\triangleq[X,Y]+L_X\eta-L_Y\xi+\frac{1}{2}d(\xi(Y)-\eta(X)),\quad\forall~X+\xi,~Y+\eta\in\Gamma(\huaT).
\end{equation}
It is not a Lie bracket, but we have
\begin{equation}\label{eqn:jacobi}
\Courant{\Courant{e_1,e_2},e_3}+c.p.=d T(e_1,e_2,e_3),\quad
\forall~e_1,e_2,e_3\in\Gamma(\huaT),
\end{equation}
where $T(e_1,e_2,e_3) $ is given by
\begin{equation}\label{T}
T(e_1,e_2,e_3)=\frac{1}{3}(\pairing{\Courant{e_1,e_2},e_3}+c.p.).
\end{equation}

Now we realize the section space of $\huaT$ as the semidirect
product of the Lie algebra $\frkX(M)$ of vector fields with the
natural 2-term deRham complex
\begin{equation}\label{eq:natural-cx}
C^\infty(M)\stackrel{d}{\longrightarrow}\Omega^1(M).
\end{equation}
For this we need to define a representation up to homotopy of $\frkX(M)$
on this complex. For any $X\in\frkX(M)$, define linear actions $\mu_0$ and $\mu_1$ by
\begin{eqnarray}
\label{C 1}\mu_0(X)(\xi)&\triangleq &\Courant{X,\xi}=L_X\xi-\half d(\xi(X)),\quad\forall ~\xi\in\Omega^1(M)\\
\label{C 2}\mu_1(X)(f)&\triangleq &\pairing{X,d
f}=\frac{1}{2}X(f),\quad\forall~f\in C^\infty(M).
\end{eqnarray}
Define $\nu:\wedge^2\frkX(M)\to\Hom(\Omega^1(M),C^\infty(M))$ by
\begin{equation}\label{C 3}
\nu(X,Y)(\xi)=T(X,Y,\xi),\quad\forall
~X,Y\in\frkX(M),~\xi\in\Omega^1(M). \end{equation}
\begin{pro}\label{pro:rep up to homotopy}
With the above notations,
$(C^\infty(M)\stackrel{d}{\longrightarrow}\Omega^1(M),\mu=\mu_0+\mu_1,\nu)$
 is a representation up to homotopy of the Lie algebra $\frkX(M)$.
\end{pro}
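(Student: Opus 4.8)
The proposition claims that the data $\mu_0,\mu_1,\nu$ of \eqref{C 1}--\eqref{C 3} satisfy the four defining conditions \eqref{eqn:d mu}, \eqref{eqn:mu0}, \eqref{eqn:mu1} and \eqref{eqn:d k} of a $2$-term representation up to homotopy. The plan is a direct verification using only Cartan calculus: Cartan's formula $L_X=d\iota_X+\iota_Xd$, together with $[L_X,L_Y]=L_{[X,Y]}$, $[L_X,\iota_Y]=\iota_{[X,Y]}$ and $dL_X=L_Xd$. The first simplification I would make is to rewrite the operators compactly: on $\Omega^1(M)$ one has $\mu_0(X)=L_X-\half d\iota_X$, while $\mu_1(X)=\half\iota_X d$ on $C^\infty(M)$. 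The second, and genuinely useful, preliminary is to evaluate the cochain $T$ of \eqref{T} on the only arguments that occur in $\nu$. Because the pairing \eqref{eqn:pair} is nonzero only between a vector field and a one-form, $T(e_1,e_2,e_3)$ vanishes unless exactly one entry is a one-form; expanding $\pairing{\Courant{X,Y},\xi}$, $\pairing{\Courant{Y,\xi},X}$ and $\pairing{\Courant{\xi,X},Y}$ via \eqref{eqn:bracket} and \eqref{eqn:pair} gives the closed form
\[
T(X,Y,\xi)=\half\,\xi([X,Y])-\frac14\big(X(\xi(Y))-Y(\xi(X))\big).
\]
With this explicit expression for $\nu=T$ the remaining identities become elementary.

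I would then check the conditions in increasing order of effort. Identity \eqref{eqn:d mu} is immediate: both $d\circ\mu_1(X)$ and $\mu_0(X)\circ d$ send $f$ to $\half L_X(df)$, using $dL_X=L_Xd$ and $(df)(X)=X(f)$. For \eqref{eqn:mu1}, acting on $f\in C^\infty(M)$ both sides reduce to $\frac14[X_1,X_2](f)$: the left-hand side because the commutator of second-order operators reduces via $X_1X_2-X_2X_1=[X_1,X_2]$, and the right-hand side because $\nu(X_1,X_2)(df)=T(X_1,X_2,df)=\frac14[X_1,X_2](f)$ from the displayed formula. For \eqref{eqn:mu0}, acting on $\xi\in\Omega^1(M)$, I would expand the commutator $[\mu_0(X_1),\mu_0(X_2)]=[L_{X_1}-\half d\iota_{X_1},\,L_{X_2}-\half d\iota_{X_2}]$; the $L$--$L$ term produces $L_{[X_1,X_2]}$, the mixed terms produce $d\iota_{[X_1,X_2]}$, and what survives in $\mu_0([X_1,X_2])-[\mu_0(X_1),\mu_0(X_2)]$ is exactly $d$ of the displayed expression, i.e. $d\circ\nu(X_1,X_2)$.

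The main obstacle is the coherence law \eqref{eqn:d k}, which is the (non-skeletal analogue of the) $3$-cocycle condition and carries the real content of the statement. Applying both sides to $\xi\in\Omega^1(M)$ and reading the commutator $[\mu_0(X_1)+\mu_1(X_1),\nu(X_2,X_3)]$ as the map $\half\iota_{X_1}d\circ\nu(X_2,X_3)-\nu(X_2,X_3)\circ(L_{X_1}-\half d\iota_{X_1})$ from $\Omega^1(M)$ to $C^\infty(M)$, one must show that its cyclic sum over $X_1,X_2,X_3$ equals $\nu([X_1,X_2],X_3)(\xi)+c.p.$ This is the heaviest computation: it involves second derivatives of $\xi$-pairings and the Jacobi identity for $[\cdot,\cdot]$, and the bookkeeping of the $\half$ and $\frac14$ coefficients is where errors are most likely. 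I would organise it by substituting the displayed formula for every occurrence of $T$, separating the terms into those containing $\xi([\cdot,\cdot])$ and those containing a single derivative $X_i(\xi(X_j))$, and showing each group cancels using the Jacobi identity. As a conceptual check (and an alternative route), I would note that under the identification of \eqref{2bracket}--\eqref{3bracket} the degree-zero bracket $l_2$ is precisely the Courant bracket \eqref{eqn:bracket} and $l_3=-T$; hence, by the Roytenberg--Weinstein description of the standard Courant algebroid as a $2$-term $L_\infty$-algebra together with Proposition \ref{pro:Lie 2}, identity \eqref{eqn:d k} is morally equivalent to the Jacobiator relation \eqref{eqn:jacobi}. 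This makes \eqref{eqn:d k} expected to hold, but since it still requires matching the coefficient conventions, the safest path remains the direct Cartan-calculus verification just outlined.
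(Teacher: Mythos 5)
Your proposal is correct, and its preliminary steps coincide with the paper's: both verify \eqref{eqn:d mu} by computing $\mu_0(X)(df)=\half\, dX(f)$, and both reduce \eqref{eqn:mu1} to the observation that each side sends $f$ to $\frac14[X_1,X_2](f)$. Where you genuinely diverge is in the treatment of \eqref{eqn:mu0} and, above all, of the coherence law \eqref{eqn:d k}. For \eqref{eqn:mu0} the paper simply invokes the known Jacobiator identity \eqref{eqn:jacobi} of the Courant bracket, $\Courant{\Courant{X,Y},\xi}+c.p.=dT(X,Y,\xi)$, whereas you expand $[\mu_0(X_1),\mu_0(X_2)]$ directly in Cartan calculus; your computation is right, and your closed formula $T(X,Y,\xi)=\half\,\xi([X,Y])-\frac14\bigl(X(\xi(Y))-Y(\xi(X))\bigr)$ checks out against the definition of $T$. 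For \eqref{eqn:d k} the paper does not brute-force the identity: it rewrites it as \eqref{c temp} and then uses two structural facts --- that $\mu_0(X)(\xi)=\Courant{X,\xi}$ and that $T^*M$ is isotropic under the pairing --- to upgrade the cyclic sum over $(X,Y,Z)$ to a fully cyclic sum over $(X,Y,Z,\xi)$, at which point the identity becomes $\pairing{X,dT(Y,Z,\xi)}+c.p.=T([X,Y],Z,\xi)+c.p.$ and is quoted from Lemma 4.5 of Roytenberg--Weinstein. Your route replaces this symmetrization-plus-citation by a direct expansion using the explicit formula for $T$; this is self-contained and feasible (the identity does hold with your coefficients), but it is exactly the ``heaviest computation'' you flag, and you leave it as a plan rather than carrying it out, whereas the paper's trick avoids that bookkeeping entirely at the price of an external reference. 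Your closing remark that \eqref{eqn:d k} is ``morally'' the Jacobiator relation is in fact very close to what the paper literally does.
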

\pf For any $f\in C^\infty(M)$, we have
$$
\mu_0(X)(d f)=L_X df-\frac{1}{2}d X( f)=\frac{1}{2}d X( f),
$$
which implies $\mu_0\circ d=d\circ\mu_1$, i.e. $\mu_i$'s are compatible
with the differential $d$. By straightforward computations, we have
\begin{eqnarray*}
\mu_0[X,Y](\xi)-[\mu_0(X),\mu_0(Y)](\xi)&=&\Courant{\Courant{X,Y},\xi}+c.p.\\
&=&d T(X,Y,\xi) =d \big(\nu(X,Y)(\xi)\big),\\
\mu_1[X,Y](f)-[\mu_1(X),\mu_1(Y)](f)&=&\half[X,Y](f)-\frac{1}{4}\big(X(Y(f))-Y(X(f))\big)\\
&=&\frac{1}{4}[X,Y](f),\\
\nu(X,Y)(d f)&=&T(X,Y,d
f)=\frac{1}{3}\big(\half[X,Y](f)+\frac{1}{4}\big(X(Y(f))-Y(X(f))\big)\\
&=&\frac{1}{4}[X,Y](f),
\end{eqnarray*}
which implies (\ref{eqn:mu0}) and (\ref{eqn:mu1}). At last we need
to prove (\ref{eqn:d k}), which is obviously equivalent to
\begin{equation}\label{c temp}
\mu_1(X)T(Y,Z,\xi)-T(Y,Z,\mu_0(X)(\xi))+c.p.(X,Y,Z)=T([X,Y],Z,\xi)+c.p.(X,Y,Z).
\end{equation}
Observe that since $\mu_0(X)(\xi)=\Courant{X,\xi}$, we have
$$T(Y,Z,\mu_0(X)(\xi))+c.p.(X,Y,Z)+T([X,Y],Z,\xi)+c.p.(X,Y,Z)=T([X,Y],Z,\xi)+c.p.(X,Y,Z,\xi).$$
Furthermore, since for any $f\in \CWM$, $\mu_1(X)(f)=\pairing{X,df}$
and the cotangent bundle $T^*M$ is isotropic under the pairing
$(\ref{eqn:pair})$, we have
$$
\mu_1(X)T(Y,Z,\xi)+c.p.(X,Y,Z)=\pairing{X,dT(Y,Z,\xi)}+c.p.(X,Y,Z,\xi).
$$

Thus, (\ref{c temp}) is equivalent to
$$
\pairing{X,d T(Y,Z,\xi)}+c.p.(X, Y, Z, \xi)=T([X,Y],Z,\xi)+c.p.(X, Y,
Z, \xi),
$$
which holds by Lemma 4.5 in \cite{rw}. \qed\vspace{3mm}

By Proposition \ref{pro:Lie 2}, we have

\begin{cor}\label{cor:courant}
$\big(C^\infty(M) \xrightarrow{\dM=0\oplus d }(\frkX(M)\oplus\Omega^1(M)),l_2,l_3\big)$
is a 2-term $L_\infty$-algebra, where $l_2$ and $l_3$ are given by
(\ref{2bracket}) and (\ref{3bracket}), in which $\mu_0$, $\mu_1$ and
$\nu$ are given by (\ref{C 1}),~(\ref{C 2}) and (\ref{C 3})
respectively.
\end{cor}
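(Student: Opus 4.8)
The plan is to obtain this statement as an immediate application of Proposition \ref{pro:Lie 2} to the representation up to homotopy constructed in Proposition \ref{pro:rep up to homotopy}; accordingly there is essentially nothing new to verify beyond matching up the data. First I would set $\frkg=\frkX(M)$, $V_0=\Omega^1(M)$ and $V_1=\CWM$, and recall that Proposition \ref{pro:rep up to homotopy} already certifies that $(\CWM\xrightarrow{\dM}\Omega^1(M),\,\mu=\mu_0+\mu_1,\,\nu)$, with $\mu_0,\mu_1,\nu$ as in (\ref{C 1}), (\ref{C 2}), (\ref{C 3}), is a 2-term representation up to homotopy of $\frkX(M)$. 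The genuine content of that verification — in particular the coherence condition (\ref{eqn:d k}) — rests on Lemma 4.5 of \cite{rw}, so I would simply take the hypothesis of Proposition \ref{pro:Lie 2} as established.

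Second, I would feed this representation up to homotopy into the general construction of Proposition \ref{pro:Lie 2}. That proposition produces the complex $V_1\xrightarrow{\dM}(\frkg\oplus V_0)$, which here is exactly $\CWM\xrightarrow{0\oplus d}(\frkX(M)\oplus\Omega^1(M))$: the semidirect-product differential post-composes the original $d\colon\CWM\to\Omega^1(M)$ with the inclusion $\Omega^1(M)\hookrightarrow\frkX(M)\oplus\Omega^1(M)$, sending $f\mapsto(0,\dM f)$, which is precisely the notation $\dM=0\oplus d$ appearing in the statement. The brackets $l_2$ and $l_3$ output by Proposition \ref{pro:Lie 2} are nothing but formulas (\ref{2bracket}) and (\ref{3bracket}) evaluated on the present $\mu_0,\mu_1,\nu$, so the conclusion that $(\CWM\xrightarrow{0\oplus d}(\frkX(M)\oplus\Omega^1(M)),l_2,l_3)$ is a 2-term $L_\infty$-algebra follows verbatim from the general result.

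As a consistency check that the identification has been made correctly, I would expand $l_2$ on $\frkX(M)\oplus\Omega^1(M)$ and confirm it reproduces the Courant bracket (\ref{eqn:bracket}): substituting (\ref{C 1}) into the first line of (\ref{2bracket}) gives $[X,Y]+L_X\eta-L_Y\xi+\half d(\xi(Y)-\eta(X))$, which is exactly $\Courant{X+\xi,Y+\eta}$; likewise $l_3(X+\xi,Y+\eta,Z+\gamma)=-\nu(X,Y)(\gamma)+c.p.=-T(X,Y,\gamma)+c.p.$ recovers the $3$-form controlling the Jacobiator in (\ref{eqn:jacobi}). Since every step is pure substitution, I expect no real obstacle; the only point demanding care is the bookkeeping that the degree-$0$ part of $l_2$ and the totally skew $l_3$ are literally the Courant data, which is exactly what exhibits this $L_\infty$-algebra as the one attached to the standard Courant algebroid.
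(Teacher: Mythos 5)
Your proposal is correct and is exactly the paper's argument: the corollary is stated as an immediate consequence of Proposition \ref{pro:Lie 2} applied to the representation up to homotopy established in Proposition \ref{pro:rep up to homotopy}. Your closing consistency check that $l_2$ and $l_3$ reproduce the Courant bracket and the form $T$ is not needed for the proof but matches the content of Remark \ref{rmk:L infty}.
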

\begin{rmk}\label{rmk:L infty}

In \cite{rw}  the authors proved  that  the sections of  a Courant
algebroid $(\huaC,\pairing{\cdot,\cdot},\Courant{\cdot,\cdot},\rho)$
form an $L_\infty$-algebra. In the case when $\huaC=\huaT$ the
standard Courant algebroid, the 2-term $L_\infty$-algebra is given
by
$$
\CWM \xrightarrow{\dM=0\oplus d } \frkX(M) \oplus
\Omega^1(M)=\Gamma(\huaT),
$$
with brackets  given by
$$
l_2(e_1,e_2)=\Courant{e_1,e_2},\quad l_2(e_1,f)=\pairing{e_1,\dM
f},\quad l_3(e_1,e_2,e_3)=-T(e_1,e_2,e_3), \quad l_{i\ge 4} =0,
$$
for any $e_1,e_2,e_3\in\Gamma(\huaT),~f\in \CWM$. Here $T$ is
defined by (\ref{T}). It is easy to verify that this is the same as
our 2-term $L_\infty$-algebra in Corollary \ref{cor:courant}.
\end{rmk}

We can also modify our complex \eqref{eq:natural-cx} to
$$
\Omega^1(M)\stackrel{\Id}{\longrightarrow}\Omega^1(M).
$$
Following the same procedure, we obtain another representation up to
homotopy of the Lie algebra $\frkX(M)$ and therefore obtain another
2-term $L_\infty$-algebra which is also totally determined by the
Courant algebroid
$(\huaT,\pairing{\cdot,\cdot},\Courant{\cdot,\cdot},\rho)$. More
precisely, for any $X\in \frkX(M),~\xi\in\Omega^1(M)$, $\mu_0=\mu_1$
is given by
$$
\mu_0(X)(\xi)\triangleq\Courant{X,\xi},
$$
and
$\nu:\frkX(M):\longrightarrow\Omega^2(\frkX^2(M),\End(\Omega^1(M),\Omega^1(M)))$
is given by
$$
\nu(X,Y)(\xi)\triangleq d T(X,Y,\xi).
$$
\begin{pro}
With the above notations,
$(\Omega^1(M)\stackrel{\Id}{\longrightarrow}\Omega^1(M),\mu=\mu_0=\mu_1,\nu)$
is a representation up to homotopy of the Lie algebra $\frkX(M)$.
\end{pro}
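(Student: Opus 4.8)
The plan is to verify the three defining identities \eqref{eqn:mu0}, \eqref{eqn:mu1}, \eqref{eqn:d k} directly, after disposing of the two trivial requirements. Since here $\dM=\Id$, the complex condition is automatic, and the compatibility \eqref{eqn:d mu} reads $\Id\circ\mu_1(X)=\mu_0(X)\circ\Id$, which holds because $\mu_0=\mu_1$ by hypothesis. The guiding idea for the rest is that this representation is the ``$d$-image'' of the one in Proposition \ref{pro:rep up to homotopy}, so the only genuinely new analytic input I expect to need is the elementary commutation identity
\[
\Courant{X,df}=L_X(df)-\half d\big(df(X)\big)=d(Xf)-\half d(Xf)=\half\,d(Xf),\qquad f\in\CWM,
\]
which lets me pass the Courant bracket through $d$ on exact forms.

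For \eqref{eqn:mu0} and \eqref{eqn:mu1}, the point is that because $\mu_0=\mu_1=\Courant{X,\cdot}$ and $\dM=\Id$, the left-hand sides of the two equations coincide and each right-hand side equals $\nu(X_1,X_2)$; so both collapse to the single statement $\Courant{[X,Y],\xi}-\Courant{X,\Courant{Y,\xi}}+\Courant{Y,\Courant{X,\xi}}=dT(X,Y,\xi)$ evaluated on $\xi\in\Omega^1(M)$. First I would use $\Courant{X,Y}=[X,Y]$ for vector fields to recast the left-hand side as a cyclic sum of double brackets, and then invoke the Jacobiator relation \eqref{eqn:jacobi} together with the antisymmetry of $\Courant{\cdot,\cdot}$ to identify it with $dT(X,Y,\xi)=\nu(X,Y)(\xi)$. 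This is exactly the computation already performed for the first two displayed identities in the proof of Proposition \ref{pro:rep up to homotopy}.

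The main obstacle is \eqref{eqn:d k}. Reading the bracket as $[\mu_0(X_1)+\mu_1(X_1),\nu(X_2,X_3)]=\mu_1(X_1)\circ\nu(X_2,X_3)-\nu(X_2,X_3)\circ\mu_0(X_1)$ and evaluating on $\xi$, the condition becomes
\[
\Courant{X_1,dT(X_2,X_3,\xi)}-dT(X_2,X_3,\Courant{X_1,\xi})+c.p.=dT([X_1,X_2],X_3,\xi)+c.p.
\]
Here I would apply the commutation identity above with $f=T(X_2,X_3,\xi)\in\CWM$ to rewrite $\Courant{X_1,dT(X_2,X_3,\xi)}=d\big(\half X_1\,T(X_2,X_3,\xi)\big)$, and factor $d$ out of the entire left-hand side, which then becomes $d$ applied to $\half X_1\,T(X_2,X_3,\xi)-T(X_2,X_3,\Courant{X_1,\xi})+c.p.$ This cyclic sum is precisely the left-hand side of \eqref{c temp}, with the degree-one action $\mu_1(X)(\cdot)=\half X(\cdot)$ and $\mu_0(X)(\cdot)=\Courant{X,\cdot}$ of Proposition \ref{pro:rep up to homotopy}; it was shown there to equal $T([X_1,X_2],X_3,\xi)+c.p.$, so applying $d$ yields $dT([X_1,X_2],X_3,\xi)+c.p.$, the required right-hand side. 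Thus the only essentially new step is the commutation lemma, after which \eqref{eqn:d k} reduces to $d$ applied to the already-established identity \eqref{c temp}, and the proposition follows.
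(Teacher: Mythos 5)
Your proof is correct, and it is essentially the argument the paper intends: the paper states this proposition without proof, saying only that one ``follows the same procedure'' as in Proposition \ref{pro:rep up to homotopy}, and your reduction of \eqref{eqn:mu0}--\eqref{eqn:mu1} to the Jacobiator identity and of \eqref{eqn:d k} to $d$ applied to \eqref{c temp} is exactly that procedure carried out. The one auxiliary fact you isolate, $\Courant{X,df}=\tfrac{1}{2}d(Xf)$, is the same computation the paper already performs at the start of the proof of Proposition \ref{pro:rep up to homotopy}, so nothing genuinely new is needed.
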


\subsection{Example \textup{II}: Courant algebroids over a point and
  string Lie 2-algebras} \label{sec:string}

A Courant algebroid over a point is literally  a quadratic Lie
algebra, namely a Lie algebra $\frkk$ together with nondegenerate
inner product $\pairing{\cdot,\cdot}$ which is invariant under the
adjoint action.  People often think that a Courant algebroid over a
point  is a string Lie 2-algebra \footnote{private conversation with
  John Baez and Urs Schreiber}. Here we justify this thinking.
\begin{defi}\label{defi:string-alg}
The  string Lie 2-algebra associated to a quadratic Lie algebra
$(\frkk, \pairing{\cdot,\cdot})$, is a  2-term $L_\infty$-algebra
$\mathbb R\stackrel{0}{\longrightarrow}\frkk$, whose degree-$0$
part is $\frkk$, degree-$1$ part is $\mathbb R$,  and $l_2,~l_3$ are
given by
\begin{eqnarray*}
~l_2(e,c)&=&0,\\
~l_2((e_1,c_1),(e_2,c_2))&=&([e_1,e_2],0),\\
~l_3((e_1,c_1),(e_2,c_2),(e_3,c_3))&=&(0,\pairing{[e_1,e_2],e_3}),
\end{eqnarray*}
where $e,~e_1,~e_2,~e_3\in\frkk,~c,~c_1,~c_2,~c_3\in\mathbb R$.
\end{defi}

The representation $\phi $ of $\frkk$ on $\mathbb R$ and the
3-cocycle $\theta:\wedge^3\frkk\longrightarrow \mathbb R$ in the
corresponding quadruple in Theorem \ref{thm:skeletal Lie 2 a} is
given by
\begin{eqnarray}
\nonumber\rho(e)(c)&=&l_2(e,c)\\
\label{3cocycle}\theta(e_1,e_2,e_3)&=&\pairing{[e_1,e_2],e_3}.
\end{eqnarray}

\begin{rmk}\label{rk:semisimple}
In the definition of string Lie 2-algebras \cite{baez:string,
  henriques} the base Lie algebra $\frkk$ is usually required
to be semisimple and of compact type, such that the Jacobiator gives
rise to the generator of $H^3(\frkk, \mathbb Z)=\mathbb Z$. This is
because  Witten's original motivation is to obtain a $3$-connected
cover of $Spin(n)$, and $\mathfrak{so}(n)$ is simple and of compact
type. However, to write down the structure of the string Lie
2-algebra, we only need a quadratic Lie algebra. Then $H^3(\frkk,
\mathbb Z)$ is not necessarily $\mathbb Z$ for a general quadratic
Lie algebra $\frkk$. For example, for the abelian Lie algebra $\R$,
any inner product is adjoint-invariant, and $H^3(\R, \Z)=0$.  We
thus face the danger that sometimes, the string Lie 2-algebra is
trivial, that is the Jacobiator corresponds to the trivial element
in $H^3(\frkk, \mathbb Z)$. Then what we have is a strict Lie
2-algebra, which is a crossed module of Lie algebras. Then the
integration of a crossed module of Lie algebras is simply a crossed
module of Lie groups.  However, we will verify that the example we
consider is not such a case.
\end{rmk}

The standard Courant algebroid
motivates us to consider the case of  the direct sum
$\frkk=\frkg\oplus\frkg^*$ of  a Lie algebra $\frkg$
and its dual with  the
semidirect product  Lie algebra structure:
$$
[X+\xi,Y+\eta]=[X,Y]_\g+\ad_X^*\eta-\ad_Y^*\xi,
$$here $[\cdot,\cdot]_\g$ is the Lie bracket of $\g$. The nondegenerate invariant pairing
$\pairing{\cdot,\cdot}$ on $\frkg\oplus \frkg^*$  is given by
$$
\pairing{X+\xi,Y+\eta}=\half(\eta(X)+\xi(Y)),\quad\forall~X+\xi,Y+\eta\in\frkg\oplus\frkg^*.
$$ With these definitions,
$(\frkg\oplus\frkg^*,[\cdot,\cdot],\pairing{\cdot,\cdot})$ is a
quadratic Lie algebra. In fact, we have
\begin{eqnarray*}
\pairing{[X_1+\xi_1,X_2+\xi_2],X_3+\xi_3}&=&\pairing{[X_1,X_2]_\g+\ad_X^*\xi_2-\ad_Y^*\xi_1,X_3+\xi_3}\\
&=&\pairing{[X_1,X_2]_\g,\xi_3}+c.p..
\end{eqnarray*}
Similarly, we have
$$
\pairing{X_2+\xi_2,[X_1+\xi_1,X_3+\xi_3]}=\pairing{[X_1,X_3]_\g,\xi_2}+c.p.,
$$
which implies that
$\pairing{X_2+\xi_2,[X_1+\xi_1,X_3+\xi_3]}+\pairing{[X_1+\xi_1,X_2+\xi_2],X_3+\xi_3}=0$,
i.e. the nondegenerate inner product $\pairing{\cdot,\cdot}$ is
invariant under the adjoint action. This example is a special case
of \cite[Theorem 1.12]{lu-weinstein} with $\g^*$ equipped with $0$
Lie bracket. Thus $(\g, \g^*)$ forms a Lie bialgebra or equivalently
$(\g\oplus \g^*, \g, \g^*)$ is a Manin triple. However, honestly we
have not found other Lie bialgebras (Manin triples) giving rise to
Lie 2-algebras of the form of semidirect products.

We  denote the corresponding string Lie 2-algebra of $(\g \oplus \g^*,
[\cdot, \cdot], \pairing{\cdot,\cdot})$ by
\begin{equation}\label{Lie 2 g g dual}
\mathbb R\stackrel{0}{\longrightarrow}\frkg\oplus \frkg^*.
\end{equation}
The corresponding 3-cocycle (see \eqref{3cocycle}), which we denote
by $\widetilde{\nu}:\wedge^3(\frkg\oplus \frkg^*)\longrightarrow
\mathbb R$ is
\begin{eqnarray}\label{eqn:nu3}
\widetilde{\nu}(X_1+\xi_1,X_2+\xi_2,X_3+\xi_3)&=&\pairing{[X_1+\xi_1,X_2+\xi_2],X_3+\xi_3}\\
\nonumber&=&\pairing{[X_1,X_2]_\g,\xi_3}+c.p.
\end{eqnarray}

\begin{pro}  $(\mathbb
R\stackrel{0}{\longrightarrow} \frkg^*,\mu_1=0,
\mu_0=\ad^*,\nu=[\cdot,\cdot]_\g)$ is a 2-term representation up to
homotopy of the Lie algebra $\frkg$. Moreover the string Lie
2-algebra $ \mathbb R\stackrel{0}{\longrightarrow}\frkg\oplus
\frkg^* $ is the semidirect product of $\frkg $ and the complex
$\mathbb R\stackrel{0}{\longrightarrow} \frkg^*$.
\end{pro}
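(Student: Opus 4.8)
The plan is to verify the statement in two stages: first that the quadruple $(\R\stackrel{0}{\to}\frkg^*,\ \mu_0=\ad^*,\ \mu_1=0,\ \nu)$ satisfies the three axioms of a 2-term representation up to homotopy, and then that feeding this datum into the semidirect-product construction of Proposition \ref{pro:Lie 2} reproduces, bracket for bracket, the string Lie 2-algebra of Definition \ref{defi:string-alg}. Throughout I read $\nu(X_1,X_2)\in\Hom(\frkg^*,\R)$ as the functional $\xi\mapsto\xi([X_1,X_2]_\g)$, i.e.\ as the element $[X_1,X_2]_\g\in\frkg\cong(\frkg^*)^*$; this is the precise meaning of the shorthand $\nu=[\cdot,\cdot]_\g$.

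For the first stage I would check the axioms in turn. Since $\dM=0$ and $\mu_1=0$, the compatibility (\ref{eqn:d mu}) and equation (\ref{eqn:mu1}) hold trivially, all terms being zero. Equation (\ref{eqn:mu0}) also has vanishing right-hand side (again $\dM=0$), so it reduces to $\mu_0[X_1,X_2]=[\mu_0(X_1),\mu_0(X_2)]$; this is exactly the assertion that $\ad^*\colon\frkg\to\End(\frkg^*)$ is a Lie algebra homomorphism, which is in turn equivalent to the Jacobi identity of $\frkg$. The only substantive computation is (\ref{eqn:d k}): with $\mu_1=0$ it collapses to $-\nu(X_2,X_3)\circ\ad_{X_1}^*+\text{c.p.}=\nu([X_1,X_2],X_3)+\text{c.p.}$, and evaluating both sides on $\xi\in\frkg^*$ using the defining relation $\pairing{\ad_X^*\xi,Y}=-\pairing{\xi,[X,Y]}$ turns the left side into $\xi\big(\sum_{\rm c.p.}[X_1,[X_2,X_3]]\big)$ and the right side into $\xi\big(\sum_{\rm c.p.}[[X_1,X_2],X_3]\big)$; both vanish by the Jacobi identity. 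Thus the whole of stage one rests on nothing more than Jacobi and the duality defining $\ad^*$.

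For the second stage I would apply Proposition \ref{pro:Lie 2} to the complex $\R\stackrel{0}{\to}(\frkg\oplus\frkg^*)$ and match the resulting $l_2,l_3$ against Definition \ref{defi:string-alg}. On degree-$0$ elements, formula (\ref{2bracket}) gives $l_2(X+\xi,Y+\eta)=[X,Y]_\g+\ad_X^*\eta-\ad_Y^*\xi$, which is precisely the semidirect-product (Manin-triple) bracket on $\frkg\oplus\frkg^*$, i.e.\ the string bracket $l_2((e_1,0),(e_2,0))=([e_1,e_2],0)$. The mixed and top brackets $l_2(X+\xi,f)=\mu_1(X)(f)$ and $l_2(f,g)$ vanish because $\mu_1=0$, matching $l_2(e,c)=0$. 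Finally (\ref{3bracket}) gives $l_3(X_1+\xi_1,X_2+\xi_2,X_3+\xi_3)=-\nu(X_1,X_2)(\xi_3)+\text{c.p.}$, and expanding the invariant pairing via the coadjoint relation identifies this, up to the normalization of the one-dimensional degree-$1$ space, with $\widetilde\nu(e_1,e_2,e_3)=\pairing{[e_1,e_2],e_3}$ from (\ref{eqn:nu3}), which is the string Jacobiator.

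The main obstacle — indeed essentially the only place demanding care — is the sign-and-normalization bookkeeping running through (\ref{eqn:d k}) and the $l_3$-comparison. One must consistently use the convention $\pairing{\ad_X^*\xi,Y}=-\pairing{\xi,[X,Y]}$ (the one that makes the bracket on $\frkg\oplus\frkg^*$ a genuine Lie bracket, hence $\ad^*$ a representation rather than an anti-representation), track the three cyclic permutations correctly, and account for the factor $\tfrac12$ in the invariant pairing $\pairing{\cdot,\cdot}$ when comparing with the Jacobiator. Once these conventions are pinned down, every identity needed collapses to the Jacobi identity of $\frkg$ together with $\ad$-invariance of the pairing, so there is no genuinely hard structural step remaining.
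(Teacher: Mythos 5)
Your proposal is correct and follows essentially the same route as the paper's own (much terser) proof: with $\dM=0$ and $\mu_1=0$ everything reduces to $\ad^*$ being a Lie algebra morphism and to the Jacobi identity for \eqref{eqn:d k}, after which one matches \eqref{2bracket}--\eqref{3bracket} against Definition \ref{defi:string-alg}. You are in fact more careful than the paper about the sign/normalization implicit in the shorthand $\nu=[\cdot,\cdot]_\g$ versus the factor $\tfrac12$ in the pairing, which the paper passes over with ``it is not hard to see.''
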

\begin{proof}
Since $\dM=0$, we only need to verify that $\mu_0$ and $\mu_1$ are
Lie algebra morphisms and \eqref{eqn:d k}. $\ad^*: \g \to
\End(\g^*)$ and $0$ are both Lie algebra morphisms. \eqref{eqn:d k}
follows from Jacobi identity of $[\cdot,\cdot]_\g$.

Then  it is not hard to see that the string Lie 2-algebra $\mathbb
R\stackrel{0}{\longrightarrow}\g\oplus \frkg^*$ with formulas in
Definition \ref{defi:string-alg} is exactly the semidirect product
of $\g$ with the complex  $(\mathbb R\stackrel{0}{\longrightarrow}
\frkg^*,\mu_1=0, \mu_0=\ad^*,\nu=[\cdot,\cdot]_\g)$ with the
formulas \eqref{2bracket}, \eqref{3bracket}.
\end{proof}

\begin{pro}\label{pro:nondegenerate}
If the Lie algebra $\frkg$ is semisimple, the Lie algebra 3-cocycle
$\widetilde{\nu}$ which is given by (\ref{eqn:nu3}) is not exact.
\end{pro}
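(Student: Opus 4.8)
The statement asserts that the Chevalley--Eilenberg $3$-cocycle $\widetilde\nu\in\wedge^3(\frkg\oplus\frkg^*)^*$ represents a nonzero class in $H^3(\frkg\oplus\frkg^*,\R)$ with trivial $\R$-coefficients (trivial because $l_2(e,c)=0$, so the representation $\phi$ of the skeletal quadruple is zero). The plan is to compute this cohomology by exploiting the extra grading that $\frkg\oplus\frkg^*$ carries: assign weight $0$ to $\frkg$ and weight $1$ to the abelian ideal $\frkg^*$. Since $[\frkg,\frkg]\subseteq\frkg$, $[\frkg,\frkg^*]\subseteq\frkg^*$ and $[\frkg^*,\frkg^*]=0$, the bracket is homogeneous of weight $0$, so $\frkg\oplus\frkg^*$ is a graded Lie algebra. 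Consequently the CE differential preserves the weight of a cochain (the number of $\frkg^*$-slots it consumes), the whole complex splits as a direct sum over this weight, and one gets $H^n(\frkg\oplus\frkg^*,\R)=\bigoplus_{p+q=n}H^p(\frkg,\wedge^q\frkg)$, where the weight-$q$ piece $\wedge^\bullet\frkg^*\otimes\wedge^q\frkg$ is identified with the CE complex of $\frkg$ with coefficients in $\wedge^q\frkg$ under the adjoint action (the $\frkg^*$-slots dualizing against the $\wedge^q\frkg$ coefficient, the mixed bracket $\ad^*$ becoming the adjoint action).

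The next step is to locate $\widetilde\nu$ in this decomposition. By formula \eqref{eqn:nu3} every term of $\widetilde\nu$ consumes exactly two entries from $\frkg$ and one from $\frkg^*$, so $\widetilde\nu$ is homogeneous of weight $1$ and sits in the $(p,q)=(2,1)$ summand. Because the splitting is a splitting of complexes, $\widetilde\nu$ is exact on $\frkg\oplus\frkg^*$ if and only if its image is exact in the weight-$1$ subcomplex; hence the whole question reduces to whether the corresponding class in $H^2(\frkg,\frkg)$ (adjoint coefficients) is nonzero. Extracting this class is immediate: feeding $X_1,X_2\in\frkg$ and $\xi\in\frkg^*$ into $\widetilde\nu$ and reading off the $\frkg$-valued $2$-cochain $c$ determined by $\pairing{c(X_1,X_2),\xi}=\widetilde\nu(X_1,X_2,\xi)$, the cyclic terms carrying $\xi_1,\xi_2$ drop out and one is left with $c$ proportional to the bracket $[\cdot,\cdot]_\g$ of $\frkg$ itself.

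The main obstacle is precisely this reduced computation in $H^2(\frkg,\frkg)$. One must confront it carefully, because Whitehead's second lemma gives $H^2(\frkg,M)=0$ for every finite-dimensional module $M$ over a semisimple $\frkg$; moreover the reduced cochain $c\propto[\cdot,\cdot]_\g$ is manifestly the CE-coboundary of a multiple of the identity endomorphism $\Id\in C^1(\frkg,\frkg)$, so the weight-$1$ reduction \emph{exhibits a candidate primitive} rather than an obstruction. The decisive point of the plan is therefore to determine whether this $\Id$-primitive lifts to a genuine $2$-cochain on all of $\frkg\oplus\frkg^*$ — the natural candidate being the antisymmetrized canonical pairing — or whether some compatibility with the weight-$0$ and weight-$2$ pieces obstructs such a lift. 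Settling this lifting question (equivalently, pinning down the exact coefficient system of the reduced complex and checking that semisimplicity genuinely produces, rather than kills, the relevant class) is where essentially all the content of the proposition lies, and it is the step I expect to be the crux.
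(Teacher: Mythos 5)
Your weight decomposition is set up correctly, but you stop one step short of the answer, and that last step does not go the way the proposition claims. Since (as you correctly observe) the bracket of $\frkg\ltimes\frkg^*$ is homogeneous for the weight grading, the Chevalley--Eilenberg complex is a \emph{direct sum} of subcomplexes indexed by weight, and $\widetilde{\nu}$ lies entirely in the weight-$1$ summand $C^{\bullet}(\frkg,\frkg_{\ad})$. For a cocycle concentrated in a single summand of a direct sum of complexes there is no lifting problem: it is exact in the total complex if and only if it is exact in its own summand, so the ``compatibility with the weight-$0$ and weight-$2$ pieces'' you worry about cannot obstruct anything. You have already identified the image of $\widetilde\nu$ in $C^2(\frkg,\frkg_{\ad})$ as $\tfrac12[\cdot,\cdot]_\g=d\bigl(\tfrac12\Id\bigr)$, so the conclusion forced by your own setup is that $\widetilde{\nu}$ \emph{is} exact. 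Concretely, the $2$-cochain on $\frkg\oplus\frkg^*$ corresponding to $\tfrac12\Id\in\frkg^*\otimes\frkg$ is exactly your candidate, the antisymmetrized canonical pairing
\[
\phi(X+\xi,Y+\eta)=\tfrac{1}{2}\bigl(\eta(X)-\xi(Y)\bigr),
\]
and a direct computation with $d\phi(a,b,c)=-\phi([a,b],c)+\phi([a,c],b)-\phi([b,c],a)$ and $(\ad^*_X\xi)(Y)=-\xi([X,Y])$ gives $d\phi=\widetilde{\nu}$ on the nose, with no hypothesis on $\frkg$. (This is the familiar fact that the Cartan $3$-form of a Manin triple --- here the cotangent Lie algebra $\frkg\ltimes\frkg^*$ --- is exact, the primitive being built from the two Lagrangian projections.) Whitehead's lemma is not in tension with anything; it merely confirms that the weight-$1$ class must vanish.

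So the real issue is not a gap in your argument but a discrepancy with the paper: carried to completion, your approach refutes Proposition \ref{pro:nondegenerate} rather than proving it. The paper's own proof breaks at the line asserting the right-hand side equals $d\varphi(X,Y,\huaK(\xi))$: after transporting by the Killing form, the only component of $\varphi$ that enters is its mixed component on $\frkg\otimes\huaK(\frkg^*)$, which is an arbitrary bilinear form on $\frkg\times\frkg$ with no antisymmetry --- for the actual primitive $\phi$ above it is $\tfrac12\pairing{\cdot,\cdot}_k$, which is \emph{symmetric}. The displayed expression is therefore not the Chevalley--Eilenberg coboundary of a $2$-cochain on $\frkg$, and no contradiction with the non-exactness of the Cartan $3$-form of $\frkg$ arises. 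If you finish your write-up, state the exactness of $\widetilde\nu$ with the explicit primitive $\phi$; note that this also undermines the subsequent corollary on $[\widetilde{F_2}]$ and the introduction's claim that the Lie $2$-algebra is non-strict for semisimple $\frkg$.
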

\pf Let $\pairing{\cdot,\cdot}_k$ be the Killing form on $\frkg$.
The proof follows from the fact that the Cartan 3-form
$\pairing{[\cdot,\cdot],\cdot}_k$   on a semisimple Lie algebra is
not exact. 
Since $\frkg$ is semisimple, the Killing form
$\pairing{\cdot,\cdot}_k$ is nondegenerate. Identify $\frkg^*$ and
$\frkg$ by using the Killing form $\pairing{\cdot,\cdot}_k$ and let
$\huaK$ be the corresponding isomorphism,
$$
\pairing{\huaK(\xi),X}_k=\pairing{\xi,X}.
$$
Assume that $\widetilde{\nu}=d\phi$ for some
$\phi:\wedge^2(\frkg\oplus\frkg^*)\longrightarrow \mathbb R$, define
$\varphi:\wedge^2(\frkg\oplus\frkg)\longrightarrow \mathbb R$ by
$$
\phi(X+\xi,Y+\eta)=\varphi(X+\huaK(\xi),Y+\huaK(\eta)),
$$then we have
\begin{eqnarray*}
\widetilde{\nu}(X,Y,\xi)&=&d\phi(X,Y,\xi)\\
&=&-\phi([X,Y],\xi)+\phi(\ad^*_X\xi, Y)-\phi(\ad^*_Y\xi, X)\\
&=&-\varphi([X,Y],\huaK(\xi))+\varphi([X,\huaK(\xi)],
Y)-\varphi([Y,\huaK(\xi)], X)\\
&=&d\varphi(X,Y,\huaK(\xi)).
\end{eqnarray*}
On the other hand, we have
$$
\widetilde{\nu}(X,Y,\xi)=\pairing{[X,Y],\xi}=\pairing{[X,Y],\huaK(\xi)}_k,
$$
which implies that the Cartan 3-from
$$
\pairing{[X,Y],\huaK(\xi)}_k=d\varphi(X,Y,\huaK(\xi)),
$$ is exact.
This is a contradiction. \qed\vspace{3mm}

In the last section, we will give  the integration of the string Lie
2-algebra $ \mathbb R\stackrel{0}{\longrightarrow}\frkg\oplus
\frkg^* $ by using the semidirect product of a Lie group with its
2-term representation up to homotopy. It turns out that this string
Lie 2-algebra can be integrated to a special Lie 2-group with a
finite dimensional model.

\subsection{Example III: Omni-Lie algebroids $\dev E\oplus \jet E$}

The notion of  omni-Lie algebroids, which was introduced by Chen and
Liu in \cite{clomni}, is a generalization of Weinstein's omni-Lie
algebras. Just as Dirac structures of an omni-Lie algebra
characterize Lie algebra structures on a vector space, Dirac
structures of an omni-Lie algebroid  characterize Lie algebroid
structures on a vector bundle. See \cite{clsdirac} for more details.
In fact, the role of omni-Lie algebroids $\dev E\oplus \jet E$ in
$E$-Courant algebroids, which was introduced in \cite{clsecourant},
is the same as the role of standard Courant algebroids $TM\oplus
T^*M$ in Courant algebroids.

Now we briefly recall the notion of omni-Lie algebroids and we will
see that it gives rise to a 2-term $L_\infty$-algebra which is a
semidirect product. In this subsection $E$ is a vector bundle over a
smooth manifold $M$, $\Gamma(E)$ is the section space of $E$.

For a vector bundle $E$, let $\dev E$ be its covariant differential
operator bundle. The associated Atiyah sequence is given by
\begin{equation}\label{Seq:DE}
\xymatrix@C=0.5cm{0 \ar[r] & \gl(E)  \ar[rr]^{\id} &&
                \dev{E}  \ar[rr]^{a} && TM \ar[r]  & 0.
                }
\end{equation}
The associated $1$-jet vector bundle ${\jet} E$ is defined as
follows. For any $m\in M$, $({\jet}{E})_m$ is defined as a quotient
of local sections of $E$. Two local sections $u_1$ and $u_2$ are
equivalent and we denote this by $u_1\sim u_2$ if
$$ u_1(m)=u_2(m) ~~\mbox{ and }~~ d\pairing{u_{1
},\xi}_m=d\pairing{u_{2 },\xi}_m, \quad\forall~ \xi\in\Gamma(E^*).$$
So any $\mu\in ({\jet}{E})_m$ has a representative $u\in\Gamma(E)$
such that
  $\mu=[u]_m$.
Let $\p$ be the projection which sends $[u]_m$ to $u(m)$.  Then
$\Ker\p \cong  \Hom(TM,E)$ and there is a short exact sequence,
called the jet sequence of $E$,
\begin{equation}\label{Seq:JetE}
\xymatrix@C=0.5cm{0 \ar[r] & \Hom(TM,E)  \ar[rr]^{~\qquad \e~} &&
                {\jet}{E} \ar[rr]^{\p} && E \ar[r]  & 0,
                }
\end{equation}
from which it is straightforward to see that $\jet E$ is a finite
dimensional vector bundle. Moreover, $\Gamma(\jet{E})$ is isomorphic
to $\Gamma(E) \oplus \Gamma (T^*M \otimes E)$ as an $\Real$-vector
space,
   and any
$u\in \Gamma (E)$ has a lift $\jetd u\in\Gamma ({\jet} E)$ by taking
its equivalence class, such that
\begin{equation}\label{Eqt:CWMmoduleGammaE}
\jetd(fu)=f \jetd u+ d f\otimes u, \quad \forall~ f\in \CWM.
\end{equation}
In \cite{clomni} the authors proved that
\begin{eqnarray*}  {\jet E} \cong
\set{\nu\in \Hom(\dev{E},E)\,|\,
\nu(\Phi)=\Phi\circ\nu(\Id_{E}),\quad\forall~ ~ ~\Phi\in \gl(E)}.
\end{eqnarray*}

Therefore, there is   an $E$-pairing between $\jet{E}$ and $\dev{E}$
by setting:
\begin{equation}
\label{Eqt:conpairingE} \conpairing{\mu,\frkd}_E~ \defbe
\frkd(u),\quad\forall~ ~~ \mu\in (\jet{E})_m,~\frkd\in(\dev{E})_m,
\end{equation}
where $u\in \Gamma(E)$ satisfies $\mu=[u]_m$. Particularly, one has
\begin{eqnarray}\label{conpairing1}
\conpairing{\mu,\Phi}_E &=& \Phi\circ \p(\mu),\quad\forall~ ~~ \Phi\in \gl(E),~\mu\in\jet{E};\\
\label{conpairing2} \conpairing{{\frky},\frkd}_{E} &=& {\frky}\circ
a(\frkd),\quad\forall~ ~~ \frky\in \Hom(TM,E),~\frkd\in\dev{E}.
\end{eqnarray}
 Furthermore, we claim that $\Gamma (\jet E)$ is an invariant
subspace of the Lie derivative $\Lied_{\frkd}$ for any
 $\frkd \in\Gamma(\dev{E})$, which is defined by the
 Leibniz rule as follows:
\begin{eqnarray}\nonumber
\conpairing{\Lied_{\frkd}\mu,\frkd\pie}_{E}&\defbe&
\frkd\conpairing{\mu,\frkd\pie}_{E}-\conpairing{\mu,[\frkd,\frkd\pie]_{\dev}}_{E},
\quad\forall~ \mu \in \Gamma(\jet{E}), ~
~\frkd\pie\in\Gamma(\dev{E}).
\end{eqnarray}

Introduce a nondegenerate symmetric $E$-valued $2$-form
$\ppairingE{\cdot,\cdot}$ on $ \omni\triangleq\dev{E}\oplus \jet{E}$
by:
$$
\ppairingE{\frkd+\mu,\frkr+\nu}\defbe \half(\conpairing{\frkd,\nu}_E
+\conpairing{\frkr,\mu}_E),\quad\forall~ ~~
\frkd,\frkr\in\dev{E},~\mu,\nu\in\jet{E}.
$$
Furthermore, we define an antisymmetric bracket
$\Courant{\cdot,\cdot}$ on $\Gamma(\omni)$ by:
\begin{eqnarray*}
\Courant{\frkd+\mu,\frkr+\nu}&\defbe&
[\frkd,\frkr]_{\dev}+\Lied_{\frkd}\nu-\Lied_{\frkr}\mu +
\half\big(\jetd\conpairing{\mu,\frkr}_E-\jetd\conpairing{\nu,\frkd}_E\big).
\end{eqnarray*}

In \cite{clomni} the authors  call  the quadruple
$(\omni,\Courant{\cdot,\cdot},\ppairingE{\cdot,\cdot},\omnirho)$ the
{\bf omni-Lie algebroid} \footnote{It is slightly different from the
notion given in \cite{clomni}, where the bracket is not
skewsymmetric.} associated to the vector bundle $E$,  where $\rho$
is the projection of $\omni$ onto $\dev{E}$. Even though
$\Courant{\cdot,\cdot}$ is antisymmetric,  it is not a Lie bracket.
More precisely, for any $X,Y,Z\in \Gamma(\omni)$, we have
$$\Courant{\Courant{X,Y},Z}+c.p.=\jetd T(X,Y,Z),$$
where $T:\Gamma(\wedge^3\omni)\longrightarrow \Gamma(E)$ is defined
by \be\label{T}
T(X,Y,Z)=\frac{1}{3}\big(\ppairingE{\Courant{X,Y},Z}+c.p.\big). \ee

Now let us construct a 2-term $L_\infty$-algebra from the omni-Lie
algebroid $\omni$. Obviously, $\Gamma(\dev E)$ is a Lie algebra and
there is a natural 2-term complex
$$
\Gamma(E)\stackrel{\dM=0\oplus\jetd}{\longrightarrow}\Gamma(\jet E).
$$

For any $\frkd\in\Gamma(\dev E)$, define linear actions $\mu_0$ and
$\mu_1$ by
\begin{equation}\label{O 1}\left\{\begin{array}{l}
\mu_0(\frkd)(\mu)\triangleq \Courant{\frkd,\mu}=\Lied_\frkd\mu-\jetd\ppairingE{\mu,\frkd},\quad\forall~\mu\in \Gamma(\jet E),\\
\mu_1(\frkd)(u)\triangleq \ppairingE{\frkd,\jetd
u}=\frac{1}{2}\frkd(u),\quad\forall~u\in\Gamma(E).\end{array}\right.
\end{equation}
Define $\nu:\wedge^2\Gamma(\dev E)\longrightarrow\Hom(\Gamma(\jet
E),\Gamma( E))$ by
\begin{equation}\label{O 2}
\nu(\frkd,\frkt)(\xi)=T(\frkd,\frkt,\mu),\quad\forall
~\frkd,\frkt\in\Gamma(\dev E),~\mu\in\Gamma(\jet E).
\end{equation}
Similar to Proposition \ref{pro:rep up to homotopy}, we prove
\begin{pro}
With the above notations,
$(\Gamma(E)\stackrel{\dM=0\oplus\jetd}{\longrightarrow}\Gamma(\jet
E),\mu=\mu_0+\mu_1,\nu)$
 is a representation up to homotopy of the Lie algebra $\Gamma(\dev
E)$.
\end{pro}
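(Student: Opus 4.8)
The plan is to verify that the triple $(\Gamma(E)\stackrel{\dM=0\oplus\jetd}{\longrightarrow}\Gamma(\jet E),\mu=\mu_0+\mu_1,\nu)$ satisfies the three defining conditions of a 2-term representation up to homotopy, following the same strategy as in Proposition \ref{pro:rep up to homotopy}. The key observation driving every step is the formal parallelism between the omni-Lie algebroid $\omni=\dev E\oplus\jet E$ and the standard Courant algebroid $\huaT=TM\oplus T^*M$: the roles of $\frkX(M)$, $\Omega^1(M)$, $\CWM$, $L_X$, $d$, and the Courant bracket are played respectively by $\Gamma(\dev E)$, $\Gamma(\jet E)$, $\Gamma(E)$, $\Lied_\frkd$, $\jetd$, and the omni-Lie bracket, with the $E$-pairing $\ppairingE{\cdot,\cdot}$ replacing the canonical pairing. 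Thus I expect the entire proof to be a transcription of the earlier argument with these substitutions.

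First I would check condition 2, the compatibility $\dM\circ\mu_1(\frkd)=\mu_0(\frkd)\circ\dM$, by evaluating $\mu_0(\frkd)(\jetd u)$ for $u\in\Gamma(E)$. Using the definitions in \eqref{O 1}, one computes $\mu_0(\frkd)(\jetd u)=\Lied_\frkd(\jetd u)-\jetd\ppairingE{\jetd u,\frkd}$; the identity $\ppairingE{\jetd u,\frkd}=\half\frkd(u)=\mu_1(\frkd)(u)$ together with the fact that $\Lied_\frkd$ commutes appropriately with $\jetd$ should yield $\jetd\mu_1(\frkd)(u)$, exactly mirroring the computation $\mu_0(X)(d f)=\half d X(f)$ in the Courant case. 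Next I would verify \eqref{eqn:mu0} and \eqref{eqn:mu1}. Equation \eqref{eqn:mu0} follows from the Jacobiator identity $\Courant{\Courant{\frkd,\frkt},\mu}+c.p.=\jetd T(\frkd,\frkt,\mu)$ restricted to the relevant arguments, giving $\mu_0[\frkd,\frkt]-[\mu_0(\frkd),\mu_0(\frkt)]=\jetd\circ\nu(\frkd,\frkt)$; equation \eqref{eqn:mu1} reduces, as in the Courant case, to a direct computation showing both sides equal a constant multiple (here $\frac14$) of $[\frkd,\frkt](u)$, using $\mu_1(\frkd)(u)=\half\frkd(u)$ and the formula for $T$ evaluated on $\jetd u$.

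The main work, and the step I expect to be the principal obstacle, is the coherence condition \eqref{eqn:d k}. Following the Courant template, I would rewrite it as the analogue of \eqref{c temp}, namely $\mu_1(\frkd)T(\frkt,\frks,\mu)-T(\frkt,\frks,\mu_0(\frkd)(\mu))+c.p.=T([\frkd,\frkt],\frks,\mu)+c.p.$. The two reductions used in the Courant proof should carry over: since $\mu_0(\frkd)(\mu)=\Courant{\frkd,\mu}$, the terms $T(\frkt,\frks,\mu_0(\frkd)(\mu))+c.p.$ combine with the right-hand side into a fully cyclic expression; and since $\mu_1(\frkd)(u)=\ppairingE{\frkd,\jetd u}$ while the image of $\jetd$ is isotropic under $\ppairingE{\cdot,\cdot}$ (the analogue of $T^*M$ being isotropic), one gets $\mu_1(\frkd)T(\frkt,\frks,\mu)+c.p.=\ppairingE{\frkd,\jetd T(\frkt,\frks,\mu)}+c.p.$ cyclic over all four arguments. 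The statement then collapses to $\ppairingE{\frkd,\jetd T(\frkt,\frks,\mu)}+c.p.=T([\frkd,\frkt],\frks,\mu)+c.p.$ over the full cyclic group on $(\frkd,\frkt,\frks,\mu)$. The hard part is establishing this last identity: in the Courant case it was quoted as Lemma 4.5 of \cite{rw}, so here I would either invoke the corresponding structural identity for omni-Lie algebroids or prove it directly by expanding $T$ via its definition \eqref{T} and repeatedly using the defining properties of $\Lied_\frkd$, $\jetd$, and the invariance and isotropy properties of $\ppairingE{\cdot,\cdot}$. Once this four-fold cyclic identity is in hand, \eqref{eqn:d k} follows and the proof is complete.
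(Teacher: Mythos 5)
Your proposal is correct and takes essentially the same route the paper intends: the paper offers no separate argument for this proposition, saying only that it is proved ``similar to Proposition \ref{pro:rep up to homotopy},'' and your step-by-step transcription of the Courant-algebroid proof under the dictionary $\frkX(M)\mapsto\Gamma(\dev E)$, $\Omega^1(M)\mapsto\Gamma(\jet E)$, $C^\infty(M)\mapsto\Gamma(E)$, $d\mapsto\jetd$, $\pairing{\cdot,\cdot}\mapsto\ppairingE{\cdot,\cdot}$ is exactly that similarity made explicit. You also correctly isolate the one genuinely nontrivial input, the four-fold cyclic identity playing the role of Lemma 4.5 of \cite{rw}, which must be supplied by the corresponding structural identity for omni-Lie algebroids (available from the $E$-Courant algebroid axioms) rather than taken for granted.
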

\begin{cor}
$\big(\Gamma(E)\stackrel{\dM=0\oplus\jetd}{\longrightarrow}\Gamma(\dev
E)\oplus\Gamma(\jet E),l_2,l_3\big)$ is a 2-term $L_\infty$-algebra,
where $l_2$ and $l_3$ are given by (\ref{2bracket}) and
(\ref{3bracket}), in which $\mu$ and $\nu$ are given by (\ref{O 1})
and (\ref{O 2}) respectively.
\end{cor}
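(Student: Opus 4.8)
The plan is to mirror the proof of Proposition~\ref{pro:rep up to homotopy} step by step, checking in turn the three conditions in the definition of a $2$-term representation up to homotopy: the compatibility \eqref{eqn:d mu} of $\mu_0,\mu_1$ with $\dM=\jetd$, the two curvature identities \eqref{eqn:mu0} and \eqref{eqn:mu1}, and the coherence law \eqref{eqn:d k}. The structural facts about the omni-Lie algebroid $\omni$ that take over the roles played by their $TM\oplus T^*M$ counterparts are: the compatibility $\Lied_\frkd\jetd u=\jetd(\frkd(u))$ of the Lie derivative with the prolongation; the isotropy of $\jet E$ under $\ppairingE{\cdot,\cdot}$; the fundamental identity $\Courant{\Courant{X,Y},Z}+c.p.=\jetd T(X,Y,Z)$; and the invariance of $\ppairingE{\cdot,\cdot}$ under $\Courant{\cdot,\cdot}$.

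First I would dispose of the compatibility \eqref{eqn:d mu}. Expanding $\mu_0(\frkd)(\jetd u)=\Lied_\frkd\jetd u-\jetd\ppairingE{\jetd u,\frkd}$ and using $\Lied_\frkd\jetd u=\jetd(\frkd(u))$ together with $\ppairingE{\jetd u,\frkd}=\half\frkd(u)$ gives $\mu_0(\frkd)(\jetd u)=\half\jetd(\frkd(u))=\jetd\bigl(\mu_1(\frkd)(u)\bigr)$, which is exactly $\jetd\circ\mu_1=\mu_0\circ\jetd$. For \eqref{eqn:mu0}, since $\mu_0(\frkd)(\mu)=\Courant{\frkd,\mu}$ and $\Courant{\frkd,\frkt}=[\frkd,\frkt]_\dev$ on $\dev E$, its left-hand side is precisely the Jacobiator $\Courant{\Courant{\frkd,\frkt},\mu}+c.p.$, which by the fundamental identity equals $\jetd T(\frkd,\frkt,\mu)=\jetd\bigl(\nu(\frkd,\frkt)(\mu)\bigr)$. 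For \eqref{eqn:mu1}, a direct computation (using $\mu_1(\frkd)(u)=\half\frkd(u)$) shows that its left-hand side equals $\frac{1}{4}[\frkd,\frkt]_\dev(u)$; it then remains to check the analogue of $T(X,Y,df)=\frac14[X,Y](f)$, namely $T(\frkd,\frkt,\jetd u)=\frac14[\frkd,\frkt]_\dev(u)$, which follows by expanding \eqref{T} and using the pairing formulas \eqref{O 1}.

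The crux is the coherence law \eqref{eqn:d k}, which I would handle exactly as in the Courant case. It is equivalent to
\begin{equation*}
\mu_1(\frkd_1)T(\frkd_2,\frkd_3,\mu)-T(\frkd_2,\frkd_3,\mu_0(\frkd_1)(\mu))+c.p.(\frkd_1,\frkd_2,\frkd_3)=T([\frkd_1,\frkd_2]_\dev,\frkd_3,\mu)+c.p.(\frkd_1,\frkd_2,\frkd_3).
\end{equation*}
Writing $\mu_0(\frkd_1)(\mu)=\Courant{\frkd_1,\mu}$ and using the total antisymmetry of $T$, the $\mu_0$-term combines with the right-hand side so as to enlarge the three-fold cyclic sum to a four-fold one; and writing $\mu_1(\frkd_1)T(\frkd_2,\frkd_3,\mu)=\ppairingE{\frkd_1,\jetd T(\frkd_2,\frkd_3,\mu)}$ and using the isotropy of $\jet E$ to add a vanishing term, the left-hand side likewise becomes a four-fold cyclic sum. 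This reduces \eqref{eqn:d k} to the single identity
\begin{equation*}
\ppairingE{\frkd_1,\jetd T(\frkd_2,\frkd_3,\mu)}+c.p.(\frkd_1,\frkd_2,\frkd_3,\mu)=T([\frkd_1,\frkd_2]_\dev,\frkd_3,\mu)+c.p.(\frkd_1,\frkd_2,\frkd_3,\mu).
\end{equation*}
The hard part will be this last step: it is the $E$-valued analogue of Lemma~4.5 of \cite{rw}, which cannot be quoted directly because $\omni$ is an $E$-Courant algebroid rather than a standard Courant algebroid. I would therefore reprove it in the omni-Lie setting by expanding $T$ via \eqref{T} and repeatedly using the invariance of $\ppairingE{\cdot,\cdot}$ under $\Courant{\cdot,\cdot}$ and the fundamental identity to convert the resulting double brackets into the required shape; all remaining steps are the same cyclic-sum bookkeeping as in Proposition~\ref{pro:rep up to homotopy}.
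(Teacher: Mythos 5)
Your proposal is correct and takes essentially the same route as the paper: the corollary is obtained by verifying that $(\Gamma(E)\xrightarrow{\jetd}\Gamma(\jet E),\mu,\nu)$ is a $2$-term representation up to homotopy of $\Gamma(\dev E)$ by transplanting the proof of Proposition \ref{pro:rep up to homotopy} to the omni-Lie algebroid setting, and then applying the semidirect-product construction of Proposition \ref{pro:Lie 2}. The paper suppresses this verification entirely (it only says ``similar to Proposition \ref{pro:rep up to homotopy}''), so your write-up --- including the correct observation that the analogue of Lemma 4.5 of \cite{rw} must be re-proved for the $E$-valued pairing rather than quoted --- supplies exactly the argument the paper leaves implicit.
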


\begin{rmk}
If the base manifold $M$ is a point, i.e. $E$ is a vector space, for
which  we use a new notation $V$, then $\dev V=\gl(V),~\jet V=V$,
and we recover the notion of omni-Lie algebras. The complex
$\Gamma(V)\stackrel{0\oplus\jetd}{\longrightarrow}\Gamma(\jet V)$
reduces to $V\stackrel{\Id}{\longrightarrow}V$, which is a
representation up to homotopy of $\gl(V)$ with $(\mu_0=\mu_1, \nu)$
given by
\begin{equation}\label{omni mu nu}
\mu_0(A)(u)=\half Au,\quad
\nu(A,B)=\frac{1}{4}[A,B],\quad\forall~A,B\in\gl(V),~u\in V.
\end{equation}
Hence even though an omni-Lie algebra   $\gl(V)\oplus V$ is not a
Lie algebra,  we can extend it to a  2-term $L_\infty$-algebra, of
which $L_0=\gl(V)\oplus V,~L_1=V$, $l_2$ and $l_3$ are given by
(\ref{2bracket}) and (\ref{3bracket}), in which $\mu$ and $\nu$ are
given by (\ref{omni mu nu}). This 2-term $L_\infty$-algebra is a
semidirect product of $\gl(V)$ with $V\xrightarrow{\Id} V$.

We will study the global object of the 2-term $L_\infty$-algebra
associated to an omni-Lie algebra in the forthcoming paper \cite{sheng-zhu:II}.
\end{rmk}

\section{Representation up to homotopy of Lie groups and semidirect
  product} \label{sec:gp}

The representation up to homotopy of a Lie group was introduced in
\cite{abad}. In this section we define the semidirect product of a
Lie group with a 2-term representation up to homotopy and prove that
the semidirect product is a Lie 2-group. Thus we first recall some
background on Lie 2-groups.

A group is a monoid where every element has an inverse. A 2-group is
a monoidal category where every object has a weak inverse and every
morphism has an inverse. Denote the category of smooth manifolds and
smooth maps by $\rm Diff$, a semistrict\footnote{Please see the
  introduction for the reason why we call it a semistrict Lie 2-group.} Lie 2-group is  a 2-group
in $\rm DiffCat$, where  $\rm DiffCat$ is the 2-category consisting
of categories, functors, and natural transformations in $\rm Diff$.
For more details, see \cite{baez:2gp} and here  we only recall the
expanded definition:
\begin{defi}{\rm\cite{baez:2gp}}
A semistrict Lie 2-group consists of an object $C$ in $\rm
DiffCat$, i.e. \[ \xymatrix{ C_1 \ar@<1ex>[r]^{s} \ar[r]_{t} & C_0},
\]
where $C_1,~C_0$ are objects in $\rm Diff$, $s,~t$ are the source
and target maps, and there is a vertical multiplication
$\cdot_\ve:C\times C\longrightarrow C$, together with
\begin{itemize}
\item[$\bullet$] a  functor (horizontal
multiplication) $\cdot_\h:C\times C\longrightarrow C$,
\item[$\bullet$] an identity object $1$,
\item[$\bullet$]a contravariant  functor  $\inv:C\longrightarrow C$
\end{itemize}
and the following natural isomorphisms:
\begin{itemize}
\item[$\bullet$] the {\bf associator}
$$
a_{x,y,z}:(x\cdot_\h y)\cdot_\h z\longrightarrow x\cdot_\h
(y\cdot_\h z),
$$
\item[$\bullet$]the {\bf left} and {\bf right unit}
$$
l_x:1\cdot_\h x\longrightarrow x,\quad r_x:x\cdot_\h
1\longrightarrow x,
$$
\item[$\bullet$]the  {\bf  unit} and {\bf  counit}
$$
i_x:1\longrightarrow x\cdot_\h \inv(x),\quad e_x:\inv(x)\cdot_\h
x\longrightarrow 1,
$$
\end{itemize}
such that the following diagrams commute:
\begin{itemize}
\item[$\bullet$] the {\bf pentagon identity } for the associator
\[
 \xymatrix{ & (w\cdot_\h x)\cdot_\h(y\cdot_\h z)\ar[dr]^{a_{w,x, y\cdot_\h z}}&  \\
((w\cdot_\h x)\cdot_\h y)\cdot_\h z\ar[ur]^{a_{(w\cdot_\h
x),y,z}}\ar[dr]_{a_{w,x,y}\cdot_\h 1_z}&&w\cdot_\h (x\cdot_\h
(y\cdot_\h z))\\
&(w\cdot_\h (x\cdot_\h y))\cdot_\h z\stackrel{a_{w,x\cdot_\h
y,z}}{\longrightarrow} w\cdot_\h ((x\cdot_\h y)\cdot_\h
z)\ar[ur]^{1_w\cdot_\h a_{x,y,z}}&}
\]
\item[$\bullet$] the {\bf triangle identity} for the left and right
unit lows:
\[
 \xymatrix{
( x\cdot_\h 1)\cdot_\h y\ar[rr]^{a_{x,1,y}}\ar[dr]^{r_x\cdot_\h 1_y}&&x\cdot_\h(1\cdot_\h y)\ar[dl]^{1_x\cdot_\h l_y}\\
&x\cdot_\h y& }
\]
\item[$\bullet$]the {\bf first zig-zag identity}:
\[
 \xymatrix{
&(x\cdot_\h \inv(x))\cdot_\h x\stackrel{a_{x,\inv(x),x}}{\longrightarrow}x\cdot_\h(\inv(x)\cdot_\h x)\ar[dr]^{1_x\cdot_\h e_x}&\\
1\cdot_\h x\ar[dr]^{l_x}\ar[ur]^{i_x\cdot_\h 1_x}&&x\cdot_\h 1\\
&x\ar[ur]^{r_x^{-1}}&}
\]
\item[$\bullet$]the {\bf second zig-zag identity}:
\[
 \xymatrix{
&\inv(x)\cdot_\h (x\cdot_\h \inv(x))\stackrel{a_{\inv(x),x,\inv(x)}}{\longrightarrow}(\inv(x)\cdot_\h x)\cdot_\h\inv(x)\ar[dr]^{e_x\cdot_\h 1_{\inv(x)}}&\\
\inv(x)\cdot_\h 1\ar[dr]^{r_{\inv(x)}}\ar[ur]^{1_{\inv(x)}\cdot_\h i_x}&&1\cdot_\h \inv(x).\\
&\inv(x)\ar[ur]^{l_{\inv(x)}^{-1}}&}
\]
\end{itemize}
\end{defi}

In the special case where $a_{x,y,z},~l_x,~r_x,~i_x,~e_x$ are all
identity isomorphisms, we call such a Lie 2-group a {\em strict Lie
  2-group}\footnote{The notion of strict Lie 2-groups is the same as \cite{baez:2gp}.}.

In the same reference, they also defined the notion of special
2-groups, which we recall here,
\begin{defi}\label{defi:special 2 G}
A {\bf special Lie 2-group}  is a Lie 2-group of which the source
and target coincide and  the left unit law $l$, the right unit law
$r$, the unit $i$ and the counit $e$ are identity isomorphisms.
\end{defi}

For classification of special Lie 2-groups,  we need the group cohomology with smooth cocycles, that is we
consider the cochain complex with smooth morphisms $G^{\times
n}\to M$ with $G$ a Lie group, $M$ its module. And the differential is defined as usual for
group cohomology. We denote this cohomology by $H_{sm}^\bullet(G, M)$.

\begin{thm}{\rm \cite[Theorem 8.3.7]{baez:2gp}}\label{thm:special 2 G}  There is a
one-to-one correspondence between special Lie 2-groups and
quadruples $(K_1,K_2,\Phi,\Theta)$ consisting a Lie group $K_1$, an
abelian group $K_2$, an action $\Phi$ of $K_1$ as automorphisms of
$K_2$ and a normalized smooth 3-cocycle $\Theta:K_1^3\longrightarrow
K_2$. Two special Lie 2-groups are isomorphic if and only if they
corresponds to the same\footnote{up to isomorphisms of groups of
  course} $(K_1, K_2, \Phi)$ and the
corresponding 3-cocycles represent the same element in $H^3_{sm}(K_1,
K_2)$.
\end{thm}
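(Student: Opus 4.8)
The plan is to build an explicit dictionary between the categorical data of a special Lie 2-group and the quadruple $(K_1,K_2,\Phi,\Theta)$, integrating the algebra-level correspondence of Theorem \ref{thm:skeletal Lie 2 a}. Starting from a special Lie 2-group $C$ with object manifold $C_0$ and morphism manifold $C_1$, I would first exploit the defining conditions of Definition \ref{defi:special 2 G}. Since $s=t$, the associator $a_{x,y,z}$ is a morphism whose source and target objects coincide, which forces $(x\cdot_\h y)\cdot_\h z = x\cdot_\h(y\cdot_\h z)$ on the nose; hence $(C_0,\cdot_\h)$ is an honest Lie group, and I set $K_1=C_0$. Next I take $K_2$ to be the vertical automorphism group of the identity object $1$, namely those $f\in C_1$ with $s(f)=t(f)=1$ under $\cdot_\ve$. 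An Eckmann--Hilton argument --- using that $\cdot_\h$ is a functor (the interchange law between $\cdot_\h$ and $\cdot_\ve$) and that $\Id_1$ is a shared two-sided unit for both multiplications --- shows $K_2$ is abelian. The identity-assigning map $u:x\mapsto \Id_x$ is a smooth group homomorphism splitting $s:C_1\to C_0$, so $C_1\cong K_2\rtimes K_1$ as Lie groups, where the action $\Phi$ of $K_1$ on $K_2$ is horizontal conjugation $\Phi_x(k)=\Id_x\cdot_\h k\cdot_\h \Id_{x^{-1}}$.

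With this splitting in hand, the associator $a_{x,y,z}$ is an automorphism of the object $xyz$, and translating it back to $1$ by horizontal multiplication with $\Id_{(xyz)^{-1}}$ yields a smooth map $\Theta:K_1^3\to K_2$. The pentagon identity, rewritten through this translation, becomes precisely the multiplicative $3$-cocycle condition $\delta\Theta=0$, while the triangle identity together with the requirement that $l,r,i,e$ all be identity isomorphisms forces $\Theta$ to be normalized. Smoothness of $\Theta$ and of $\Phi$ is automatic since every structure map of $C$ lives in $\rm Diff$. This produces the quadruple $(K_1,K_2,\Phi,\Theta)$ from $C$.

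Conversely, from a quadruple I would reconstruct $C$ by setting $C_0=K_1$ and $C_1=K_2\times K_1$, with vertical composition $(k,x)\cdot_\ve(k',x)=(kk',x)$, horizontal product $(k,x)\cdot_\h(k',y)=\big(k\,\Phi_x(k'),\,xy\big)$, associator $a_{x,y,z}=(\Theta(x,y,z),xyz)$, and $l,r,i,e$ all equal to identities. Verifying that this is a special Lie 2-group reduces the pentagon to the cocycle condition on $\Theta$ and the triangle and unit laws to its normalization --- exactly reversing the previous step --- and the two constructions are manifestly mutually inverse, giving the one-to-one correspondence.

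Finally, for the isomorphism statement I would fix $(K_1,K_2,\Phi)$ and analyze isomorphisms of the resulting 2-groups as strictly unital monoidal functors that are the identity on $C_0$ and $C_1$. The only remaining freedom is a smooth monoidal natural isomorphism, i.e.\ a $1$-cochain $\psi:K_1^2\to K_2$, and monoidal coherence makes the two associators differ exactly by $\delta\psi$; hence $\Theta$ and $\Theta'$ yield isomorphic special Lie 2-groups if and only if they represent the same class in $H^3_{sm}(K_1,K_2)$. The main obstacle I anticipate is the careful bookkeeping in the translation-to-the-identity step: matching the pentagon identity to the chosen $3$-cocycle convention with the correct placement of the $\Phi$-action among the multiplicative factors, and likewise pinning down the coboundary $\delta\psi$ in the isomorphism argument. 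The Eckmann--Hilton commutativity of $K_2$ and this pentagon-to-cocycle translation are the substantive points, while smoothness is routine throughout.
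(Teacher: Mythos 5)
The paper itself offers no proof of this statement: it is imported verbatim from Baez--Lauda (\cite[Theorem 8.3.7]{baez:2gp}), and the only in-text content is the remark immediately following it, which recalls that the quadruple produces a $2$-group with objects $K_1$, morphisms $K_1\ltimes_\Phi K_2$, and associator $\Theta$. Your sketch reconstructs the standard argument of that reference, and its skeleton is correct: $s=t$ applied to the associator forces strict associativity of $\cdot_\h$ on objects, Eckmann--Hilton (interchange plus the shared unit $\Id_1$) gives abelianness of $K_2$, the unit map splits $s$ so that $C_1\cong K_2\rtimes K_1$, the pentagon becomes the $3$-cocycle identity, the triangle plus trivial $l,r,i,e$ give normalization, and the coherence cell of a monoidal functor accounts for the coboundary ambiguity. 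A few points deserve tightening. First, your conjugation formula $\Phi_x(k)=\Id_x\cdot_\h k\cdot_\h\Id_{\inv(x)}$, and indeed the claim that $(C_1,\cdot_\h)$ is an honest group, are not automatic: horizontal composition of morphisms is a priori associative only up to vertical conjugation by the associator, and one must observe that this conjugation is trivial precisely because the vertical isotropy $K_2$ is abelian and vertical composition is addition in the $K_2$-factor (and that $a_{x,1,y}=\Id$ follows from the triangle with trivial $l,r$). Second, $\psi:K_1^2\to K_2$ is a $2$-cochain, not a $1$-cochain; the statement you need is that the monoidal-functor hexagon identifies $\Theta'-\Theta$ with $\delta\psi$, a $3$-coboundary. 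Third, restricting to isomorphisms that are the identity on $C_0$ and $C_1$ is legitimate only because the theorem's footnote lets you first absorb an isomorphism of the triple $(K_1,K_2,\Phi)$; a general equivalence of $2$-groups also induces automorphisms of $K_1$ and $K_2$ that must be quotiented out at that stage. None of these is a fatal gap; they are exactly the bookkeeping you flagged, and your construction agrees with the one the paper recalls.
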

\begin{rmk}
We briefly recall that given a quadruple $(K_1,K_2,\Phi,\Theta)$ the
corresponding semistrict Lie 2-group has the Lie group $K_1$ as the space
of objects and the semidirect product Lie group $K_1 \ltimes_{\Phi}
K_2$ as the space of morphisms. The associator is given by $\Theta$.
\end{rmk}
\begin{defi}
A unital 2-term representation up to homotopy of a Lie group $G$
consists of
\begin{itemize}
\item[\rm 1.] A 2-term complex of vector spaces
$V_1\stackrel{\dM}{\longrightarrow}V_0$.

\item[\rm 2.] A nonassociative action $F_1$ on $V_0$ and $V_1$
  satisfying $$ \dM F_1 = F_1 \dM, \quad F_1(1_G)=\Id.$$

\item[\rm 3.] A smooth map $F_2:G\times G\longrightarrow\End(V_0,V_1)$ such that
\begin{equation}\label{eqn:F fail}
F_1(g_1)\cdot F_1(g_2)-F_1(g_1\cdot g_2)=[\dM,F_2(g_1,g_2)],
\end{equation}
as well as
\begin{equation}\label{eqn:F closed}
F_1(g_1)\circ F_2(g_2,g_3)-F_2(g_1\cdot g_2,g_3)+F_2(g_1, g_2\cdot
g_3)-F_2(g_1, g_2)\circ F_1(g_3)=0.
\end{equation}
\end{itemize}
\end{defi}
We denote this 2-term representation up to homotopy of the Lie group
$G$ by $(V_1\stackrel{\dM}{\longrightarrow}V_0,F_1,F_2)$. One should
be careful that even if $F_1$ is a usual associative action,
\eqref{eqn:F closed} is not equivalent to $F_2$ being a 2-cocycle. This is strangely different from the Lie algebra case (see Section
\ref{sec:int}). Define $\widetilde{F_2}:(G\ltimes
V_0)^3\longrightarrow V_1$ by
\begin{equation}\label{F2tuta}
\widetilde{F_2}((g_1,\xi_1),(g_2,\xi_2),(g_3,\xi_3))=F_2(g_1,
g_2)(\xi_3).
\end{equation}
If  $F_1$ is a usual associative action, we form  $G\ltimes V_0$
the semidirect product. Then
$V_1$ is a  $G\ltimes V_0$-module with an associated action
$\widetilde{F_1}$ of $G\ltimes V_0$ on $V_1$
$$
\widetilde{F_1}(g,\xi)(m)=F_1(g)(m),\quad\forall~m\in V_1.
$$

\begin{pro}\label{pro:3cocycle} If  $F_1$ is the usual associative action of the Lie group $G$  on the complex
$V_1\stackrel{\dM}{\longrightarrow}V_0$, then $\widetilde{F_2}$
defined by (\ref{F2tuta}) is a group 3-cocycle representing an
element in $H^3_{sm}(G\ltimes V_0,V_1)$.
\end{pro}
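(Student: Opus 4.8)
The plan is to compute the smooth group-cohomology coboundary $\delta\widetilde{F_2}$ directly and to observe that its vanishing is precisely the content of \eqref{eqn:F closed}.

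First I would fix the ambient structure. Since $F_1$ is an associative action, $G\ltimes V_0$ is a genuine Lie group with multiplication $(g,\xi)(g',\xi')=(gg',\,\xi+F_1(g)\xi')$, and $\widetilde{F_1}$ is an honest smooth left action of $G\ltimes V_0$ on $V_1$: indeed
\[
\widetilde{F_1}\big((g,\xi)(g',\xi')\big)=F_1(gg')=F_1(g)F_1(g')=\widetilde{F_1}(g,\xi)\,\widetilde{F_1}(g',\xi'),
\]
where associativity of $F_1$ is exactly what is used. Hence $H^\bullet_{sm}(G\ltimes V_0,V_1)$ is well defined with this module structure, and smoothness of $\widetilde{F_2}$ is immediate from smoothness of $F_2$ and of the group operations.

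Next I would write out the coboundary of the $3$-cochain $\widetilde{F_2}$. For $h_i=(g_i,\xi_i)$, $i=1,2,3,4$, the usual group-cohomology differential reads
\[
(\delta\widetilde{F_2})(h_1,h_2,h_3,h_4)=\widetilde{F_1}(h_1)\widetilde{F_2}(h_2,h_3,h_4)-\widetilde{F_2}(h_1h_2,h_3,h_4)+\widetilde{F_2}(h_1,h_2h_3,h_4)-\widetilde{F_2}(h_1,h_2,h_3h_4)+\widetilde{F_2}(h_1,h_2,h_3).
\]
Substituting \eqref{F2tuta} together with the semidirect multiplication, and using that $\widetilde{F_2}$ reads off only the $G$-parts of its first two arguments and the $V_0$-part of its third, the five terms become
\[
F_1(g_1)F_2(g_2,g_3)(\xi_4)-F_2(g_1g_2,g_3)(\xi_4)+F_2(g_1,g_2g_3)(\xi_4)-F_2(g_1,g_2)\big(\xi_3+F_1(g_3)\xi_4\big)+F_2(g_1,g_2)(\xi_3).
\]
The two $F_2(g_1,g_2)(\xi_3)$ contributions cancel, and what remains is exactly \eqref{eqn:F closed} evaluated at $(g_1,g_2,g_3)$ and applied to $\xi_4$, hence $0$. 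This establishes $\delta\widetilde{F_2}=0$.

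The one point to watch is the twist inside the semidirect product: in the term $\widetilde{F_2}(h_1,h_2,h_3h_4)$ the $V_0$-part of the third entry is $\xi_3+F_1(g_3)\xi_4$, and it is precisely the piece $F_2(g_1,g_2)F_1(g_3)\xi_4$ produced by this twist that reproduces the last summand $-F_2(g_1,g_2)\circ F_1(g_3)$ of \eqref{eqn:F closed}, while the untwisted $\xi_3$ part cancels against $\widetilde{F_2}(h_1,h_2,h_3)$. I would also note that \eqref{eqn:F fail}, which in the associative case reduces to $[\dM,F_2]=0$, plays no role here: the cocycle identity is governed entirely by \eqref{eqn:F closed}. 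Finally, if one wants the normalized representative needed to invoke Theorem \ref{thm:special 2 G}, this is automatic from $F_1(1_G)=\Id$ once $F_2$ is normalized, and in any case can be arranged by subtracting a coboundary without changing the class.
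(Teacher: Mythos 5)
Your proposal is correct and follows essentially the same route as the paper: both expand the smooth group-cohomology coboundary of $\widetilde{F_2}$ on $G\ltimes V_0$, use the semidirect-product multiplication to see that the $F_2(g_1,g_2)(\xi_3)$ contributions cancel while the twist produces the $F_2(g_1,g_2)\circ F_1(g_3)(\xi_4)$ term, and identify the remainder with \eqref{eqn:F closed}. Your added remarks on the well-definedness of the module structure and on normalization are harmless extras not present in the paper's proof.
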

\pf By direct computations, we have
\begin{eqnarray*}
&&d\widetilde{F_2}((g_1,\xi_1),(g_2,\xi_2),(g_3,\xi_3),(g_4,\xi_4))\\&=&\widetilde{F_1}(g_1,\xi_1)\widetilde{F_2}((g_2,\xi_2),(g_3,\xi_3),(g_4,\xi_4))\\
&&-\widetilde{F_2}((g_1,\xi_1)\cdot(g_2,\xi_2),(g_3,\xi_3),(g_4,\xi_4))+\widetilde{F_2}((g_1,\xi_1),(g_2,\xi_2)\cdot(g_3,\xi_3),(g_4,\xi_4))\\
&&-\widetilde{F_2}((g_1,\xi_1),(g_2,\xi_2),(g_3,\xi_3)\cdot(g_4,\xi_4))+\widetilde{F_2}((g_1,\xi_1),(g_2,\xi_2),(g_3,\xi_3))\\
&=&F_1(g_1)F_2(g_2,g_3)(\xi_4)-F_2(g_1\cdot g_2,g_3)(\xi_4)+F_2(g_1,
g_2\cdot g_3)(\xi_4)\\
&&-F_2(g_1,g_2)(\xi_3+F_1(g_3)(\xi_4))+F_2(g_1,g_2)(\xi_3)\\
&=&\big(F_1(g_1)\circ F_2(g_2,g_3)-F_2(g_1\cdot g_2,g_3)+F_2(g_1,
g_2\cdot g_3)-F_2(g_1, g_2)\circ F_1(g_3)\big)(\xi_4).
\end{eqnarray*}
By (\ref{eqn:F closed}), $\widetilde{F_2}$ is a Lie group 3-cocycle.
\qed\vspace{3mm}

Similar to the fact that associated to any representation of a Lie
group, we can form a new Lie group which is their semidirect
product, for the 2-term representation up to homotopy of a Lie
group, we can form a Lie 2-group.

\begin{thm}\label{thm:main 1}
Given a 2-term representation up to homotopy
$(V_1\stackrel{\dM}{\longrightarrow}V_0,F_1,F_2)$ of a Lie group
$G$, its semidirect product with $G$ is defined to be
\begin{equation}\begin{array}{c}
G\times V_0\times V_1\\
\vcenter{\rlap{s }}~\Big\downarrow\Big\downarrow\vcenter{\rlap{t }}\\
G\times V_0.
 \end{array}\end{equation}
Then it is a Lie 2-group with the following structure maps:

 The source and target are given by
\be\label{s t} s(g,\xi,m)=(g,\xi),\quad t(g,\xi,m)=(g,\xi+\dM m).
\ee The vertical multiplication $\cdot_\ve$ is given by
$$
(h,\eta,n)\cdot_\ve(g,\xi,m) =(g,\xi,m+n),\quad \mbox{where}~
h=g,\eta=\xi+\dM m.
$$
The horizontal multiplication $\cdot_\h$ of objects is given by
\be\label{m o} (g_1,\xi)\cdot_\h (g_2,\eta)=(g_1\cdot
g_2,\xi+F_1(g_1)(\eta)), \ee the horizontal multiplication
$\cdot_\h$ of morphisms is given by \be\label{m m}
(g_1,\xi,m)\cdot_\h (g_2,\eta,n)=(g_1\cdot
g_2,\xi+F_1(g_1)(\eta),m+F_1(g_1)(n)). \ee The inverse map $\inv$ is
given by \be \inv(g,\xi)=(g^{-1},-F_1(g^{-1})(\xi)). \ee The
identity object is $(1_G,0)$.\\
The  associator
$$
a_{(g_1,\xi),(g_2,\eta),(g_3,\gamma)}:\big((g_1,\xi)\cdot_\h(g_2,\eta)\big)\cdot_\h(g_3,\gamma)\longrightarrow
(g_1,\xi)\cdot_\h\big((g_2,\eta)\cdot_\h(g_3,\gamma)\big)
$$
is given by \be\label{associator}
a_{(g_1,\xi),(g_2,\eta),(g_3,\gamma)}=(g_1\cdot g_2\cdot
g_3,\xi+F_1(g_1)(\eta)+F_1(g_1\cdot
g_2)(\gamma),F_2(g_1,g_2)(\gamma)). \ee The  unit
$i_{(g,\xi)}:(1_G,0)\longrightarrow (g,\xi)\cdot_\h \inv(g,\xi)$ is
given by \be\label{unit} i_{(g,\xi)}=(1_G,0,-F_2(g,g^{-1})(\xi)).
\ee All the other natural isomorphisms are identity isomorphisms.
\end{thm}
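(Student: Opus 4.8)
The plan is to verify, one structure map at a time, the axioms of a semistrict Lie $2$-group from the expanded definition, exploiting the fact that every morphism $(g,\xi,m)$ is an ``affine translation'' of the object $(g,\xi)$ by $\dM m$, so that the whole verification collapses to linear identities in $V_1$. First I would treat the underlying category $C_1\rightrightarrows C_0$: with $s,t$ as in \eqref{s t}, the identity morphism on $(g,\xi)$ is $(g,\xi,0)$ and vertical composition $\cdot_\ve$ simply adds the $V_1$-components, so associativity and unitality are immediate from linearity of $\dM$. Next I would check that $\cdot_\h$ is a functor: compatibility with $s$ is clear from \eqref{m o} and \eqref{m m}, compatibility with $t$ reduces exactly to $\dM F_1=F_1\dM$, and the interchange law between $\cdot_\h$ and $\cdot_\ve$ follows from linearity of each $F_1(g)$. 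The unit objects satisfy $1\cdot_\h x=x=x\cdot_\h 1$ strictly because $F_1(1_G)=\Id$, and $\inv(x)\cdot_\h x=(1_G,0)=1$ holds on the nose; this is what allows $l$, $r$ and the counit $e$ to be taken as identity isomorphisms.

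The core of the argument is the associator \eqref{associator}. I would first verify it is a legitimate morphism: the $V_0$-components of its source $(A_1A_2)A_3$ and target $A_1(A_2A_3)$ differ by $\big(F_1(g_1)F_1(g_2)-F_1(g_1g_2)\big)(\gamma)$, which by the restriction of \eqref{eqn:F fail} to $V_0$ equals $\dM F_2(g_1,g_2)(\gamma)$, matching $\dM$ of its $V_1$-component $F_2(g_1,g_2)(\gamma)$. Naturality of the associator against a triple of morphisms $(g_i,\xi_i,\mu_i)$ reduces, after cancelling the translation contributions in the first two slots, to $F_2(g_1,g_2)\dM=F_1(g_1)F_1(g_2)-F_1(g_1g_2)$ on $V_1$, which is the restriction of \eqref{eqn:F fail} to $V_1$. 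The same equation \eqref{eqn:F fail} on $V_0$ shows $x\cdot_\h\inv(x)=(1_G,-\dM F_2(g,g^{-1})(\xi))$, so the unit \eqref{unit} is a well-defined morphism from $1$ to $x\cdot_\h\inv(x)$; its naturality is checked by the same affine-translation bookkeeping.

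It remains to check the four coherence diagrams, and here the key observation is that every arrow lies over the same underlying product of group elements, so each diagram collapses to an identity among $V_1$-components, bearing in mind that a horizontal product with an identity morphism re-twists the surviving $V_1$-component by $F_1$ via \eqref{m m}. Computing the five associators in the pentagon---two of which are the horizontal products $a\cdot_\h 1$ and $1\cdot_\h a$, contributing $F_2(g_1,g_2)(\xi_3)$ and $F_1(g_1)F_2(g_2,g_3)(\xi_4)$ respectively---I expect the pentagon to reduce, after cancellation, precisely to \eqref{eqn:F closed} evaluated on $\xi_4$. The triangle identity reduces to $F_2(g,1_G)=0$; the first zig-zag telescopes to $0$ directly from the definitions of $i_x$ and the associator; and the second zig-zag, in which the labelled arrow is the inverse associator, reduces---after using \eqref{eqn:F closed} together with the normalizations $F_2(1_G,g)=F_2(g,1_G)=0$ to obtain $F_1(g^{-1})F_2(g,g^{-1})=F_2(g^{-1},g)F_1(g^{-1})$---to a telescoping cancellation. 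Throughout, smoothness of all structure maps is automatic, since they are assembled from $F_1$, $F_2$, $\dM$ and the group operations.

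The main obstacle I anticipate is twofold. The bulk of the genuine work is the pentagon, and equivalently the second zig-zag: the bookkeeping of how each horizontal product with an identity re-twists the $V_1$-component by $F_1$, together with keeping track of arrow directions, is error-prone, and it is exactly this bookkeeping that must be arranged so the diagram matches \eqref{eqn:F closed}. The second, more structural point is the role of the normalization $F_2(1_G,g)=F_2(g,1_G)=0$: the triangle and the zig-zags are precisely where ``unital'' is used, and without it the left and right units and the (co)unit could not all be identity isomorphisms. I would therefore make this normalization explicit---it is the content of the unital hypothesis, or can be arranged by a coboundary adjustment of $F_2$---before invoking it in those diagrams.
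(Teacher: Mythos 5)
Your proof follows essentially the same route as the paper's: functoriality of $\cdot_\h$ and the interchange law are checked directly, the associator's well-definedness and naturality reduce to (\ref{eqn:F fail}), the unit $i_{(g,\xi)}$ is handled by the same equation, and the pentagon collapses to (\ref{eqn:F closed}) evaluated on the last object's $V_0$-component --- exactly the computation the paper carries out, the paper then leaving the triangle and zig-zag identities to the reader. Your additional observation that the triangle identity (with identity left/right unit laws) forces the normalization $F_2(g,1_G)=0$, which does not follow from the stated definition of a unital $2$-term representation up to homotopy (only $F_1(1_G)=\Id$ is assumed there), flags a point the paper's proof silently skips; making that normalization explicit, or arranging it by a coboundary adjustment as you suggest, is a genuine improvement rather than a divergence in method.
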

\pf By (\ref{s t}), (\ref{m o}) and (\ref{m m}), it is
straightforward to see that
\begin{eqnarray*}
s\big((g_1,\xi,m)\cdot_\h (g_2,\eta,n)\big)&=&s(g_1,\xi,m)\cdot_\h s(g_2,\eta,n),\\
t\big((g_1,\xi,m)\cdot_\h (g_2,\eta,n)\big)&=&t(g_1,\xi,m)\cdot_\h
t(g_2,\eta,n).
\end{eqnarray*}
Thus the multiplication $\cdot_\h$ respects  the source and target
map. Furthermore, it is not hard  to check that the horizontal and
vertical multiplications commute, i.e.
$$
\big((g,\xi+\dM m,n)\cdot_\h(g^\prime,\eta+\dM
p,q)\big)\cdot_\ve\big((g,\xi,m)\cdot_\h(g^\prime,\eta,p)\big)=\big((g,\xi+\dM
m,n)\cdot_\ve(g,\xi,m)\big)\cdot_\h\big((g^\prime,\eta+\dM
p,q)\cdot_\ve(g^\prime,\eta,p)\big)
$$

\be\label{m commute with v} \xymatrix@C+2em{
  \bullet &
  \ar@/_2pc/[l]_{(g,\xi)}_{}="0"
  \ar[l]|{(g,\xi+\dM m)}^{}="1"_{}="1b"
  \ar@/^2pc/[l]^{(g,\xi+\dM (m+n))}^{}="2"
  \ar@{=>} "0";"1"^{m}
  \ar@{=>} "1b";"2"^{n}
  \bullet &
  \ar@/_2pc/[l]_{(g^\prime,\eta)}_{}="3"
  \ar[l]|{(g^\prime,\eta+\dM p)}^{}="4"_{}="4b"
  \ar@/^2pc/[l]^{(g^\prime,\eta+\dM (p+q))}^{}="5"
  \ar@{=>} "3";"4"^{p}
  \ar@{=>} "4b";"5"^{q}
  \bullet.
} \ee

It follows from (\ref{eqn:F fail}) that the associator
$a_{(g_1,\xi),(g_2,\eta),(g_3,\gamma)}$ defined by
(\ref{associator}) is indeed a morphism from
$\big((g_1,\xi)\cdot_\h(g_2,\eta)\big)\cdot_\h(g_3,\gamma)$ to
$(g_1,\xi)\cdot_\h\big((g_2,\eta)\cdot_\h(g_3,\gamma)\big)$. To see that
it is natural, we need to show that \be\label{left} a_{(g_1,\xi+\dM
m),(g_2,\eta+\dM n),(g_3,\gamma+\dM k)}\cdot_\h
\Big(\big((g_1,\xi,m)\cdot_\h (g_2,\eta,n)\big)\cdot_\h
(g_3,\gamma,k)\Big) \ee is equal to \be\label{right}
\Big((g_1,\xi,m)\cdot_\h \big((g_2,\eta,n)\cdot_\h
(g_3,\gamma,k)\big)\Big)\cdot_\h
a_{(g_1,\xi),(g_2,\eta),(g_3,\gamma)}, \ee i.e. the following
 diagram commutates:
\[
\xymatrix{
\big((g_1,\xi)\cdot_\h(g_2,\eta)\big)\cdot_\h(g_3,\gamma)\ar[d]\ar[r]^{a}&(g_1,\xi)\cdot_\h\big((g_2,\eta)\cdot_\h(g_3,\gamma)\big)\ar[d]\\
\big((g_1,\xi+\dM m)\cdot_\h(g_2,\eta+\dM
n)\big)\cdot_\h(g_3,\gamma+\dM k)\ar[r]^{a}&(g_1,\xi+\dM
m)\cdot_\h\big((g_2,\eta+\dM n)\cdot_\h(g_3,\gamma+\dM k)\big). }
\]
By straightforward computations, we  obtain that (\ref{left}) is
equal to
$$
\big(g_1\cdot g_2\cdot g_3,\xi+F_1(g_1)(\eta)+F_1(g_1\cdot
g_2)(\gamma),m+F_1(g_1)(n)+F_1(g_1\cdot
g_2)(k)+F_2(g_1,g_2)(\gamma+\dM k)\big),
$$
and (\ref{right}) is equal to
$$
\big(g_1\cdot g_2\cdot g_3,\xi+F_1(g_1)(\eta)+F_1(g_1\cdot
g_2)(\gamma),m+F_1(g_1)(n)+F_1(g_1)\cdot
F_1(g_2)(k)+F_2(g_1,g_2)(\gamma)\big).
$$
 Hence (\ref{left}) is equal to (\ref{right}) by (\ref{eqn:F fail}).
This implies that $a_{(g_1,\xi),(g_2,\eta),(g_3,\gamma)}$ defined by
(\ref{associator}) is a natural isomorphism.

By (\ref{eqn:F fail}) and the fact that $F_1(1_G)=\Id$, the unit
given by (\ref{unit}) is indeed  a morphism from $(1_G,0)$ to
$(g,\xi)\cdot_\h \inv(g,\xi)$. To see that it is natural, we need to
prove
$$
\big((g,\xi,m)\cdot_\h\inv(g,\xi,m)\big)\cdot_\h
i_{(g,\xi)}=i_{(g,\xi+\dM m)},
$$
i.e. the following commutative diagram:

$$
\xymatrix{
  &  (1_G,0)\ar[dr]^{i_{(g,\xi)}} \ar_{i_{(g,\xi+\dM m)}}[dl]\\
(g,\xi+\dM m)\cdot_\h \inv(g,\xi+\dM m)
&&\ar[ll]_{\quad\qquad\qquad~(g,\xi,m)\cdot_\h\inv(g,\xi,m)}(g,\xi)\cdot_\h\inv(g,\xi)
}
$$
 This follows from
 $$
F_2(g,g^{-1})(\dM m)=F_1(g)\cdot F_1(g^{-1})(m)-F_1(g\cdot
g^{-1})(m)=F_1(g)\cdot F_1(g^{-1})(m)-m,
 $$ which is a special case of (\ref{eqn:F fail}).

Since $F(1_G)=\Id$, we have $$(1_G,0)\cdot_\h (g,\xi)=(g,\xi),\quad
(g,\xi)\cdot_\h(1_G,0)= (g,\xi).$$ Hence the left unit and the right
unit can also be taken as the identity isomorphism.

 The counit
$e_{(g,\xi)}:\inv(g,\xi)\cdot_\h (g,\xi)\longrightarrow (1_G,0)$ can
be taken as the identity morphism since we have
$$
\inv(g,\xi)\cdot_\h (g,\xi)=(g^{-1},-F_1(g^{-1})(\xi))\cdot_\h
(g,\xi)=(1_G,0).
$$
At last, we need to show
\begin{itemize}
\item[$\bullet$] the  pentagon identity  for the associator,
\item[$\bullet$] the  triangle identity for the left and right
unit laws,
\item[$\bullet$]the  first zig-zag identity,
\item[$\bullet$]the  second zig-zag identity.
\end{itemize}
We only give the proof of the  pentagon identity, the others can be
proved similarly and we leave them to the readers. In fact, the
pentagon identity is equivalent to
\begin{eqnarray*}
&&a_{(g_1,\xi),(g_2,\eta),(g_3,\gamma)\cdot_\h(g_4,\theta)}\cdot_\h
a_{(g_1,\xi)\cdot_\h(g_2,\eta),(g_3,\gamma),(g_4,\theta)}=\\
&&\big((g_1,\xi)\cdot_\h
a_{(g_2,\eta),(g_3,\gamma),(g_4,\theta)}\big)\cdot_\h
a_{(g_1,\xi),(g_2,\eta)\cdot_\h(g_3,\gamma),(g_4,\theta)}\cdot_\h
\big(a_{(g_1,\xi),(g_2,\eta),(g_3,\gamma)}\cdot_\h(g_4,\theta)\big)
\end{eqnarray*}
By straightforward computations, the left hand side is equal to
$$
\big(g_1\cdot g_2\cdot g_3\cdot g_4,\xi+F_1(g_1)(\eta)+F_1(g_1\cdot
g_2)(\gamma)+F_1(g_1\cdot g_2\cdot g_3)(\theta),F_2(g_1\cdot g_2,
g_3)(\theta)+F_2(g_1, g_2)(\gamma+F_1(g_3)(\theta))\big),
$$
and the right hand side is equal to
\begin{eqnarray*}
&\big(g_1\cdot g_2\cdot g_3\cdot g_4,\xi+F_1(g_1)(\eta)+F_1(g_1\cdot
g_2)(\gamma)+F_1(g_1\cdot g_2\cdot g_3)(\theta),\\&F_2(g_1,
g_2)(\gamma)+F_2(g_1, g_2\cdot g_3)(\theta)+F_1(g_1)\circ F_2(g_2,
g_3)(\theta)\big).
\end{eqnarray*}
By (\ref{eqn:F closed}), they are equal. \qed

\section{Integrating string Lie 2-algebra $\mathbb R\longrightarrow \frkg\oplus \frkg^*$}\label{sec:int}

As an application of Theorem \ref{thm:main 1}, we consider the
integration of the string Lie 2-algebra $\mathbb R\longrightarrow
\frkg\oplus \frkg^*$ given by (\ref{Lie 2 g g dual}). Now we
restrict to the case that $\frkg$ is finite dimensional. Obviously,
given a quadruple $(K_1,K_2,\Phi,\Theta)$ which represents a special
Lie 2-group (see Theorem \ref{thm:special 2 G}), by differentiation,
we obtain a quadruple $(\frkk_1,\frkk_2,\phi,\theta)$, which
represents a 2-term skeletal $L_\infty$-algebra.
\begin{defi}
A special Lie 2-group which is represented by
$(K_1,K_2,\Phi,\Theta)$ is an integration of a 2-term skeletal
$L_\infty$-algebra which is represented by
$(\frkk_1,\frkk_2,\phi,\theta)$ if the differentiation of
$(K_1,K_2,\Phi,\Theta)$ is $(\frkk_1,\frkk_2,\phi,\theta)$.
\end{defi}

If the differential $\dM$ in a 2-term complex
$V_1\stackrel{\dM}{\longrightarrow}V_0$ is 0, a representation up to
homotopy of Lie algebra $\frkg$ on
$V_1\stackrel{0}{\longrightarrow}V_0$ consists of two strict
representations $\mu_1$ and $\mu_0$, and  a liner map
$\nu:\frkg\wedge\frkg\longrightarrow\Hom(V_0,V_1)$ satisfying
equation (\ref{eqn:d k}).  This equation implies that $\nu$ is a Lie
algebra 2-cocycle representing an element in
$H^2(\frkg,\Hom(V_0,V_1))$, with the representation
$[\mu(\cdot),\cdot]$ of  $\frkg$ on $\Hom(V_0,V_1)$  defined by
$$
[\mu(\cdot),\cdot](X)(A)\triangleq[\mu(X), A]=\mu_1(X)\circ A-A\circ
\mu_0(X),\quad \forall ~X\in \frkg,~A\in\Hom(V_0,V_1).
$$

\begin{lem}
Define $\widetilde{\nu}:\wedge^3(\g\oplus V_0)\longrightarrow V_1$
by
$$
\widetilde{\nu}(X_1+\xi_1,X_2+\xi_2,X_3+\xi_3)=\nu(X_1,X_2)(\xi_3)+c.p.
$$
then $\nu$ is a 2-cocycle if and only if $\widetilde{\nu}$ is a
3-cocycle where the representation $\widetilde{\mu}$ of $\g\oplus
V_0$ on $V_1$ is given by
$$
\widetilde{\mu}(X+\xi)(m)=\mu(X)(m).
$$
\end{lem}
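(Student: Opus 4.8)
The plan is to compute the Chevalley--Eilenberg differential $d\widetilde{\nu}$ of the $3$-cochain $\widetilde{\nu}$ on the semidirect product Lie algebra $\g\ltimes V_0$ directly, and to show it is governed entirely by $d\nu$. First I would record two preliminary facts that make the assignment $\nu\mapsto\widetilde{\nu}$ legitimate: that $\widetilde{\nu}$ is genuinely alternating, which follows from the antisymmetry of $\nu$ in its two $\g$-slots (after which the cyclic sum becomes totally antisymmetric), and that $\widetilde{\mu}$ really is a representation of $\g\ltimes V_0$ on $V_1$, which holds because $\widetilde{\mu}(X+\xi)=\mu_1(X)$ only sees the $\g$-component and $\mu_1$ is already a Lie algebra representation of $\g$.

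The central observation is that $\widetilde{\nu}(a,b,c)$ is nonzero only when at least two of $a,b,c$ have nonzero $\g$-component, since each summand of $\widetilde{\nu}$ pairs two $\g$-slots through $\nu$ and evaluates on a $V_0$-slot. I would therefore split the evaluation of $d\widetilde{\nu}$ on a $4$-tuple according to the number $k$ of arguments lying in $V_0$; by multilinearity it suffices to test on arguments that are purely in $\g$ or purely in $V_0$. Using that $V_0$ is an abelian ideal of $\g\ltimes V_0$, so that $[\,V_0,V_0\,]=0$ and $[\g,V_0]\subseteq V_0$ (which strips off the $\g$-component), one checks that for $k\ge 2$ every surviving term of $d\widetilde{\nu}$---whether from the $\widetilde{\mu}$-action part or from a bracket substitution---feeds $\widetilde{\nu}$ a triple with fewer than two $\g$-components and hence vanishes; note also that $\widetilde{\mu}$ annihilates $V_0$. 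The case $k=0$ is trivial as well, since $\widetilde{\nu}$ needs a $V_0$-slot to evaluate. Thus $d\widetilde{\nu}$ reduces to its single nontrivial component, its value on $(X_0,X_1,X_2,\xi)$ with $X_i\in\g$ and $\xi\in V_0$.

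It then remains to compute $d\widetilde{\nu}(X_0,X_1,X_2,\xi)$ and to match it with $(d\nu)(X_0,X_1,X_2)(\xi)$. Here the three sources of terms organize themselves exactly as the three parts of the module action $[\mu(X),A]=\mu_1(X)\circ A-A\circ\mu_0(X)$: the action terms $(-1)^i\widetilde{\mu}(X_i)\widetilde{\nu}(\cdots)$ produce the $\mu_1\circ\nu$ contributions, the brackets $[X_i,X_j]\in\g$ produce the $\nu([\cdot,\cdot],\cdot)$ contributions, and the mixed brackets $[X_i,\xi]=\mu_0(X_i)\xi\in V_0$ produce the $-\,\nu(\cdot,\cdot)\circ\mu_0$ contributions. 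Tracking the Koszul signs, these assemble into precisely $(d\nu)(X_0,X_1,X_2)(\xi)$, so that $d\widetilde{\nu}=0$ if and only if $d\nu=0$, giving both directions of the equivalence at once. The main obstacle is purely the sign-and-index bookkeeping in this last step: one must verify that the signs coming from the positions of the omitted arguments in the four-term tuple line up with the signs in the Chevalley--Eilenberg formula for the $2$-cochain $\nu$, and in particular that the mixed-bracket terms reproduce the $-\,A\circ\mu_0$ half of the action with the correct sign.
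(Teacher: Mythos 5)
Your proposal is correct and follows essentially the same route as the paper, which likewise verifies by direct computation that $d\widetilde{\nu}$ evaluated on a $4$-tuple reduces to $(d\nu)(X_1,X_2,X_3)(\xi_4)+c.p.$, so that $d\widetilde{\nu}=0$ if and only if $d\nu=0$. Your organization of the computation by the number of $V_0$-arguments, and the identification of the three sources of terms with the three pieces of the action $[\mu(X),A]=\mu_1(X)\circ A-A\circ\mu_0(X)$, is just a more explicit bookkeeping of the same ``direct computation.''
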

\pf By direct computations, for any $X_i+\xi_i\in\g\oplus
V_0,~i=1,2,3,4,$ we have
$$d\widetilde{\nu}(X_1+\xi_1,X_2+\xi_2,X_3+\xi_3,X_4+\xi_4)=d\nu(X_1,X_2,X_3)(\xi_4)+c.p.,$$
which implies the conclusion.\qed

 The  Lie algebra homomorphism $\mu$ from $\frkg$ to
$\End(V_0)\oplus\End(V_1)$ integrates to a Lie group homomorphism
$F_1$ from the simply connected Lie group $G$ of $\frkg$ to
$GL(V_0)\oplus GL(V_1)$ with
$$
\mu(X)=\frac{d}{dt}\Big|_{t=0}F_1(\exp tX),\quad\forall~X\in\frkg.
$$
 Consequently, $\Hom(V_0,V_1)$ is a $G$-module with $G$ action
$$
g\cdot A=F_1(g)\circ A \circ F_1(g)^{-1},\quad\forall ~g\in G,~A\in
\Hom(V_0,V_1).
$$ The Lie algebra 2-cocycle
$\nu:\frkg\wedge\frkg\longrightarrow\Hom(V_0,V_1)$ can integrate to
a smooth Lie group 2-cocycle $\overline{F_2}:G\times G\longrightarrow
\Hom(V_0,V_1)$, satisfying
\begin{equation}\label{eqn:F2 closed}
F_1(g_1)\circ(\overline{F_2})(g_2,g_3)\circ
F_1(g_1)^{-1}-(\overline{F_2})(g_1\cdot
g_2,g_3)+(\overline{F_2})(g_1, g_2\cdot
g_3)-(\overline{F_2})(g_1,g_2)=0,
\end{equation}
and $\overline{F_2}(1_G,1_G)=0.$ Let us explain how.

The classical theory of cohomology of discrete groups tells us that
the equivalence classes of extensions of $G$ by a $G$ module $M$,
one to one correspond to the elements in $H^2(G, M)$. In our case,
the same theory tells us that  $H^2_{sm}(G, \Hom(V_0, V_0))$
classifies  the equivalence classes of splitting extensions of $G$
by the $G$-module $\Hom(V_0, V_1)$, which is a splitting short exact
sequence of Lie groups, with $\Hom(V_0, V_1)$ endowed with an
abelian group structure,
\begin{equation}\label{eq:extension} \Hom(V_0, V_1) \to \hat{G} \to G. \end{equation}
In a general extension $\hat{G}$ is a principal bundle over $G$,
thus it usually does not permit a smooth lift $G
\xrightarrow{\sigma} \hat{G}$. It permits such a lift if and only if
the sequence splits. However in our case, since the abelian group
$\Hom(V_0, V_1) $ is a vector space, we have  $H^1(X,\Hom(V_0, V_1)
)=0$ for any manifold $X$. The proof makes use of a partition of
unity and similar to the proof showing that $H^1(X,
\underline{\R})=0$ for the sheaf cohomology. Hence all $ \Hom(V_0,
V_1)$ principal bundles are trivial. Therefore \eqref{eq:extension}
always splits. On the other hand
 it is well-known that when $G$ is simply connected, there is a
 one-to-one correspondence between extensions of $G$
 \cite[Theorem 4.15]{brahic} and extensions of its Lie algebra
 $\g$, which in turn are classified by the Lie algebra cohomology
 $H^2(\g, \Hom(V_0, V_1))$. Hence in our case the differentiation
 map
\[ H^2_{sm} (G, \Hom(V_0, V_1)) \to H^2(\g, \Hom(V_0, V_1)) \]
is an isomorphism. Hence $\nu$  always integrates to a smooth Lie
group 2-cocycle unique up to exact 2-cocycles. Then
$\overline{F_2}(1_G,1_G)=0$ can be arranged too, because we can
always modify the section $\sigma: G \to \hat{G}$ to satisfy
$\sigma(1_G)=1_{\hat{G}}$ and the modification of sections results
in an exact term. Then combined with (\ref{eqn:F2 closed}), it is
not hard to see that
\begin{equation}\label{temp1}
\overline{F_2}(1_G,g)=\overline{F_2}(g,1_G)=0,\quad\forall~g\in
G.\end{equation} Thus $\overline{F_2}$ is a normalized 2-cocycle.

\begin{pro}\label{pro:int mu nu }
For any 2-term representation up to homotopy $(\mu,\nu)$ of a Lie
algebra $\frkg$ on the complex
$V_1\stackrel{0}{\longrightarrow}V_0$, there is an associated
representation up to homotopy $(F_1,F_2)$ of the Lie group $G$ on
the complex $V_1\stackrel{0}{\longrightarrow}V_0$, where $F_1$ is
the integration of $\mu$ and $F_2:G\times G\longrightarrow
\End(V_0,V_1)$ is defined by
\begin{equation}\label{eqn:F2}
F_2(g_1,g_2)=\overline{F_2}(g_1,g_2)\circ F_1(g_1\cdot g_2).
\end{equation}
\end{pro}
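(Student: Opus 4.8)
The plan is to verify that the pair $(F_1, F_2)$, with $F_2$ defined by \eqref{eqn:F2}, satisfies the two defining axioms of a unital 2-term representation up to homotopy of $G$, namely \eqref{eqn:F fail} and \eqref{eqn:F closed}, along with the unitality condition $F_1(1_G) = \Id$ and compatibility with $\dM$. Since we are in the skeletal case $\dM = 0$, several of these simplify dramatically: the compatibility $\dM F_1 = F_1 \dM$ holds trivially, and the right-hand side $[\dM, F_2(g_1,g_2)]$ of \eqref{eqn:F fail} vanishes. Thus \eqref{eqn:F fail} reduces to the requirement that $F_1$ be an honest (associative, multiplicative) action, which holds by construction since $F_1$ is the integration of the Lie algebra homomorphism $\mu$ to a Lie group homomorphism into $GL(V_0) \oplus GL(V_1)$.

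The substance of the proof therefore lies entirely in verifying \eqref{eqn:F closed}. First I would substitute the definition \eqref{eqn:F2}, $F_2(g_1,g_2) = \overline{F_2}(g_1,g_2) \circ F_1(g_1 \cdot g_2)$, into each of the four terms of \eqref{eqn:F closed}. The key input is the relation \eqref{eqn:F2 closed} satisfied by $\overline{F_2}$, which is the normalized smooth $2$-cocycle condition in the twisted form where $\overline{F_2}$ takes values in $\Hom(V_0,V_1)$ with the conjugation action $g \cdot A = F_1(g) \circ A \circ F_1(g)^{-1}$. The plan is to rewrite \eqref{eqn:F closed} as a statement about $\overline{F_2}$ and show it is exactly \eqref{eqn:F2 closed} after using the multiplicativity $F_1(g_1 \cdot g_2) = F_1(g_1) \circ F_1(g_2)$ to shuffle the $F_1$ factors. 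Concretely, the term $F_1(g_1) \circ F_2(g_2,g_3)$ becomes $F_1(g_1) \circ \overline{F_2}(g_2,g_3) \circ F_1(g_2 g_3)$, which I would rewrite as $F_1(g_1) \circ \overline{F_2}(g_2,g_3) \circ F_1(g_1)^{-1} \circ F_1(g_1 g_2 g_3)$; the first part of this is precisely the conjugated cocycle term appearing in \eqref{eqn:F2 closed}, and the trailing $F_1(g_1 g_2 g_3)$ is the common right factor that all four terms share.

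The heart of the computation is to check that after this rewriting every one of the four terms of \eqref{eqn:F closed} acquires the same common right factor $F_1(g_1 \cdot g_2 \cdot g_3)$, so that \eqref{eqn:F closed} holds if and only if the bracketed expression equals the left-hand side of \eqref{eqn:F2 closed}. For the terms $F_2(g_1 g_2, g_3)$, $F_2(g_1, g_2 g_3)$, and $F_2(g_1,g_2) \circ F_1(g_3)$, applying \eqref{eqn:F2} directly produces $\overline{F_2}(g_1 g_2, g_3) \circ F_1(g_1 g_2 g_3)$, $\overline{F_2}(g_1, g_2 g_3) \circ F_1(g_1 g_2 g_3)$, and $\overline{F_2}(g_1, g_2) \circ F_1(g_1 g_2) \circ F_1(g_3) = \overline{F_2}(g_1,g_2) \circ F_1(g_1 g_2 g_3)$ respectively, where the last equality again uses multiplicativity of $F_1$. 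I would then factor out $F_1(g_1 g_2 g_3)$ on the right and observe that the remaining sum of four $\Hom(V_0,V_1)$-valued terms is exactly \eqref{eqn:F2 closed}, which vanishes. Since $F_1(g_1 g_2 g_3)$ is invertible, this forces the full expression to vanish, establishing \eqref{eqn:F closed}.

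The main obstacle I anticipate is purely bookkeeping: ensuring that the conjugation twist in \eqref{eqn:F2 closed} is correctly matched against the plain (untwisted) composition appearing in \eqref{eqn:F closed}. The definition \eqref{eqn:F2} is precisely engineered so that the twisting factor $F_1(g_1)^{-1}$ implicit in the $G$-module structure on $\Hom(V_0,V_1)$ is absorbed by the difference between $F_1(g_1 g_2 g_3)$ and $F_1(g_1) \circ F_1(g_2 g_3)$; one must track these inverse factors carefully so that they cancel rather than accumulate. It remains to note the unitality: by \eqref{temp1} we have $\overline{F_2}(1_G, g) = \overline{F_2}(g, 1_G) = 0$, whence $F_2(1_G, g) = F_2(g, 1_G) = 0$ and $F_1(1_G) = \Id$ follows from $F_1$ being a group homomorphism, so $(F_1, F_2)$ is genuinely a \emph{unital} $2$-term representation up to homotopy of $G$, completing the proof.
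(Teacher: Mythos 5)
Your proposal is correct and is essentially the paper's own argument: both proofs reduce \eqref{eqn:F closed} to \eqref{eqn:F2 closed} by substituting $F_2(g_1,g_2)=\overline{F_2}(g_1,g_2)\circ F_1(g_1\cdot g_2)$, using multiplicativity of $F_1$ to extract the common right factor $F_1(g_1\cdot g_2\cdot g_3)$ (the paper runs the same computation in the opposite direction, composing with $F_1(g_1\cdot g_2\cdot g_3)$ on the right at the end). Your extra remarks on unitality and the vanishing of $[\dM,F_2(g_1,g_2)]$ in the skeletal case are the content the paper compresses into ``Obviously, \eqref{eqn:F fail} is satisfied.''
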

\pf Obviously, (\ref{eqn:F fail}) is satisfied. To see (\ref{eqn:F
closed}) is also satisfied, combine  (\ref{eqn:F2}) with
(\ref{eqn:F2 closed}). By the fact that $F_1$ is a homomorphism, we
obtain
\begin{eqnarray*}
&F_1(g_1)\circ F_2(g_2,g_3)\circ F_1(g_2\cdot g_3)^{-1}\circ
F_1(g_1)^{-1}-F_2(g_1\cdot g_2,g_3)\circ F_1(g_1\cdot g_2\cdot
g_3)^{-1}\\&+F_2(g_1, g_2\cdot g_3)\circ F_1(g_1\cdot g_2\cdot
g_3)^{-1}-F_2(g_1,g_2)\circ F_1(g_1\cdot g_2)^{-1}=0.
\end{eqnarray*}
Composed with $F_1(g_1\cdot g_2\cdot g_3)$ on the right hand side,
we obtain (\ref{eqn:F closed}). \qed

By Proposition \ref{pro:int mu nu } and Theorem \ref{thm:main 1}, we
have
\begin{thm}\label{thm:main 2}
Let $G$ be the simply connected Lie group integrating $\g$, then the
string Lie 2-algebra $\mathbb
R\stackrel{0}{\longrightarrow}\frkg\oplus\frkg^*$ given by (\ref{Lie
2 g g dual}) integrates to the following Lie 2-group,
\begin{equation}\label{2 group}\begin{array}{c}
G\times \frkg^*\times \mathbb R\\
\vcenter{\rlap{s }}~\Big\downarrow\Big\downarrow\vcenter{\rlap{t }}\\
G\times \frkg^*,
 \end{array}\end{equation}
in which the source and target are given by \be
s(g,\xi,m)=t(g,\xi,m)=(g,\xi)\ee the vertical multiplication
$\cdot_\ve$ is given by
$$
(h,\eta,n)\cdot_\ve(g,\xi,m) =(g,\xi,m+n),\quad \mbox{where}~
h=g,\eta=\xi,
$$
the horizontal multiplication $\cdot_\h$ of objects is given by
$$
 (g_1,\xi)\cdot_\h (g_2,\eta)=(g_1\cdot
g_2,\xi+\Ad^*_{g_1}\eta),
$$
the horizontal multiplication $\cdot_\h$ of morphisms is given by
$$ (g_1,\xi,m)\cdot_\h (g_2,\eta,n)=(g_1\cdot
g_2,\xi+\Ad^*_{g_1}\eta,m+n),
$$
the inverse map $\inv$ is given by $$
\inv(g,\xi)=(g^{-1},-\Ad^*_{g^{-1}}\xi), $$ The
identity object is $(1_G,0)$,\\
the  associator
$$
a_{(g_1,\xi),(g_2,\eta),(g_3,\gamma)}:\big((g_1,\xi)\cdot_\h(g_2,\eta)\big)\cdot_\h(g_3,\gamma)\longrightarrow
(g_1,\xi)\cdot_\h\big((g_2,\eta)\cdot_\h(g_3,\gamma)\big)
$$
is given by \be\label{associator}
a_{(g_1,\xi),(g_2,\eta),(g_3,\gamma)}=(g_1\cdot g_2\cdot
g_3,\xi+\Ad^*_{g_1}\eta+\Ad^*_{g_1\cdot
g_2}\gamma,F_2(g_1,g_2)(\gamma)), \ee
All the other structures are identity isomorphisms.
\end{thm}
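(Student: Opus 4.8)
The plan is to derive the asserted integration as a direct specialization of Proposition \ref{pro:int mu nu } and Theorem \ref{thm:main 1}, followed by an identification of the resulting semidirect product with the explicit data in the statement. First I would invoke the proposition preceding Proposition \ref{pro:nondegenerate}, which exhibits $\mathbb R\stackrel{0}{\to}\frkg\oplus\frkg^*$ as the semidirect product of $\frkg$ with the 2-term representation up to homotopy $(\mathbb R\stackrel{0}{\to}\frkg^*,\ \mu_1=0,\ \mu_0=\ad^*,\ \nu=[\cdot,\cdot]_\g)$. Because the differential of this complex is zero, this is exactly the degenerate situation ($\dM=0$) studied at the start of this section, so Proposition \ref{pro:int mu nu } applies without modification.

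Next I would integrate the infinitesimal data. The homomorphism $\mu=\mu_0+\mu_1$ integrates to $F_1$: on $V_0=\frkg^*$ the coadjoint infinitesimal action $\mu_0=\ad^*$ integrates to the coadjoint action $\Ad^*$ of the simply connected group $G$, and on $V_1=\mathbb R$ the action $\mu_1=0$ integrates to the trivial action. The Lie algebra $2$-cocycle $\nu$ integrates to a normalized smooth group $2$-cocycle $\overline{F_2}$ by the cohomological argument recalled just before Proposition \ref{pro:int mu nu } (the differentiation map $H^2_{sm}(G,\Hom(\frkg^*,\mathbb R))\to H^2(\frkg,\Hom(\frkg^*,\mathbb R))$ is an isomorphism and $\Hom(\frkg^*,\mathbb R)$-bundles are trivial), and one sets $F_2(g_1,g_2)=\overline{F_2}(g_1,g_2)\circ F_1(g_1\cdot g_2)$ as in \eqref{eqn:F2}. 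Proposition \ref{pro:int mu nu } then gives a representation up to homotopy $(F_1,F_2)$ of $G$, and since here $F_1$ is an honest associative action, Proposition \ref{pro:3cocycle} shows that the associated $\widetilde{F_2}$ of \eqref{F2tuta} is a smooth $3$-cocycle on $G\ltimes\frkg^*$ valued in $\mathbb R$.

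With $(F_1,F_2)$ fixed, I would substitute $\dM=0$, $F_1|_{\frkg^*}=\Ad^*$ and $F_1|_{\mathbb R}=\Id$ into the structure maps \eqref{s t}--\eqref{unit} of Theorem \ref{thm:main 1}. This is essentially bookkeeping: the target coincides with the source because $\dM=0$; the horizontal products \eqref{m o} and \eqref{m m} reduce to the stated formulas since $F_1(g_1)$ acts as $\Ad^*_{g_1}$ on the $\frkg^*$-coordinate and as the identity on the $\mathbb R$-coordinate (which is precisely why the third coordinate of the product of morphisms is simply $m+n$); and the inverse, the identity object, and the associator \eqref{associator} are read off directly, the last having third component $F_2(g_1,g_2)(\gamma)$.

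The point that is not purely formal, and which I expect to be the main obstacle, is the final assertion that all remaining structure maps --- in particular the unit $i$ --- are identity isomorphisms, i.e.\ that the integrating object is a \emph{special} Lie 2-group in the sense of Definition \ref{defi:special 2 G}. The raw formula \eqref{unit} gives $i_{(g,\xi)}=(1_G,0,-F_2(g,g^{-1})(\xi))$, which is not visibly trivial, so I would settle this through Theorem \ref{thm:special 2 G} instead: the quadruple $(G\ltimes\frkg^*,\mathbb R,\widetilde{F_1},\widetilde{F_2})$ represents a special Lie 2-group with object space $G\ltimes\frkg^*$, morphism space $(G\ltimes\frkg^*)\ltimes\mathbb R$, and associator $\Theta=\widetilde{F_2}$, for which $l,r,i,e$ are identities by definition. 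It then remains only to check the differentiation, which is built into the construction: $G\ltimes_{\Ad^*}\frkg^*$ differentiates to $\frkg\ltimes_{\ad^*}\frkg^*=\frkg\oplus\frkg^*$, $\widetilde{F_1}$ differentiates to $\widetilde{\mu}$, and $\widetilde{F_2}$ differentiates to $\widetilde{\nu}$ of \eqref{eqn:nu3}, so the differentiated quadruple is exactly the one representing the string Lie 2-algebra $\mathbb R\stackrel{0}{\to}\frkg\oplus\frkg^*$. Matching the associator coordinate $\Theta=\widetilde{F_2}$ with the third entry of \eqref{associator} confirms that the two descriptions agree, which finishes the proof.
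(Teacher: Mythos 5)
Your overall route is the same as the paper's: specialize Proposition \ref{pro:int mu nu } and Theorem \ref{thm:main 1}, recognize the result as the special Lie 2-group represented by the quadruple $(G\ltimes\frkg^*,\mathbb R,\Id,\widetilde{F_2})$ via Theorem \ref{thm:special 2 G} (which is also how the paper disposes of the non-identity unit $i_{(g,\xi)}$), and then match the differentiated quadruple with $(\frkg\oplus\frkg^*,\mathbb R,0,\widetilde{\nu})$. The bookkeeping steps are fine.

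The genuine gap is your final sentence-level claim that ``$\widetilde{F_2}$ differentiates to $\widetilde{\nu}$'' is \emph{built into the construction}. It is not. What is built in is that the group $2$-cocycle $\overline{F_2}$ on $G$ differentiates to the Lie algebra $2$-cocycle $\nu$. But $\widetilde{F_2}$ is a group $3$-cocycle on the larger group $G\ltimes\frkg^*$, built from $F_2(g_1,g_2)=\overline{F_2}(g_1,g_2)\circ F_1(g_1\cdot g_2)$, and ``differentiating a group $3$-cocycle'' means taking the antisymmetrized third derivative
$\frac{\partial^3}{\partial_{t_1}\partial_{t_2}\partial_{t_3}}\big|_{t_i=0}\sum_{\sigma\in S_3}\epsilon(\sigma)\widetilde{F_2}(\cdots)$
along one-parameter subgroups of $G\ltimes\frkg^*$. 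That this equals $\widetilde{\nu}(X_1+\xi_1,X_2+\xi_2,X_3+\xi_3)=\nu(X_1,X_2)(\xi_3)+c.p.$ is exactly the computation occupying the bulk of the paper's proof: one must expand by the product rule, and the cross terms involving $\frac{\partial}{\partial t}F_1(\cdots)$ survive a priori; they are killed only by the normalization $\overline{F_2}(1_G,g)=\overline{F_2}(g,1_G)=0$ of \eqref{temp1}. The paper's unnumbered lemma relating $\nu$ being a $2$-cocycle to $\widetilde{\nu}$ being a $3$-cocycle is a statement at the Lie algebra level only, and does not by itself give the compatibility of the two differentiation procedures at the group level. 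So your proof is missing its one non-formal verification; the approach would succeed, but this computation must be carried out (or at least reduced explicitly to the normalization conditions) rather than asserted.
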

\pf Since $F_1$ is a usual associative action, we may modify the
unit (\ref{unit}) given in Theorem \ref{thm:main 1} to be the
identity natural transformation. It turns out that (\ref{2 group})
is a special Lie 2-group and is represented by $(G\ltimes
\frkg^*,\mathbb R,\Id,\widetilde{F_2})$, where $G\ltimes \frkg^*$ is
the semidirect product with the coadjoint action of $G$ on $\g^*$,
$\Id$ is the constant map $G\ltimes \g^* \to \Aut(\R)$ by mapping
everything to $\Id \in \Aut(\R)$, and  $\widetilde{F_2}$ is given by
\begin{equation}\label{eq:tf2}
\widetilde{F_2}((g_1,\xi_1),(g_2,\xi_2),(g_3,\xi_3))=F_2(g_1,
g_2)(\xi_3)=\overline{F_2}(g_1, g_2)\circ F_1(g_1\cdot g_2)(\xi_3).
\end{equation} Since $\overline{F_2}$ is normalized, $\widetilde{F_2}$ is also
normalized.  The string Lie 2-algebra $\mathbb
R\stackrel{0}{\longrightarrow}\frkg\oplus\frkg^*$ is  skeletal and
is represented by $(\frkg\oplus\frkg^*,\mathbb
R,0,\widetilde{\nu})$, where $\widetilde{\nu}$ is given by
(\ref{eqn:nu3}). Thus to show that our Lie 2-group is an integration
of the string Lie algebra, we only need to show that the
differential of the Lie group 3-cocycle $\widetilde{F_2}$ is the Lie
algebra 3-cocycle $\widetilde{\nu}$.  By direct computations
\cite[Lemma 7.3.9]{brylinski}, we have
\begin{eqnarray*}
&&\frac{\partial^3}{\partial_{t_1}\partial_{t_2}\partial_{t_3}}\Big|_{t_i=0}\sum_{\sigma\in
S_3}\epsilon(\sigma)\widetilde{F_2}\big((e^
{t_{\sigma(1)}X_{\sigma(1)}},t_{\sigma(1)}\xi_{\sigma(1)}),(e^
{t_{\sigma(2)}X_{\sigma(2)}},t_{\sigma(2)}\xi_{\sigma(2)}),(e^
{t_{\sigma(3)}X_{\sigma(3)}},t_{\sigma(3)}\xi_{\sigma(3)})\big)\\
&=&\frac{\partial^3}{\partial_{t_1}\partial_{t_2}\partial_{t_3}}\Big|_{t_i=0}\Big(\overline{F_2}(e^{
t_1X_1},e^ {t_2X_2})\circ F_1(e^{ t_1X_1}\cdot e^
{t_2X_2})(t_3\xi_3)\Big)+c.p.\\
&=&\frac{\partial^2}{\partial_{t_1}\partial_{t_2}}\Big|_{t_i=0}\Big(\overline{F_2}(e^{
t_1X_1},e^ {t_2X_2})\circ F_1(e^{ t_1X_1}\cdot e^ {t_2X_2})(\xi_3)\Big)+c.p.\\
&=&\frac{\partial}{\partial_{t_1}}\Big|_{t_1=0}\Big(\frac{\partial}{\partial_{t_2}}\big|_{t_2=0}\overline{F_2}(e^{
t_1X_1},e^ {t_2X_2})\circ F_1(e^{
t_1X_1})(\xi_3)+\overline{F_2}(e^{t_1X_1},1_G)\circ\frac{\partial}{\partial_{t_2}}\big|_{t_2=0}F_1(e^{
t_1X_1}\cdot e^{ t_2X_2})\Big)\\&&+c.p.\\
&=&\frac{\partial}{\partial_{t_1}}\frac{\partial}{\partial_{t_2}}\Big|_{t_i=0}\overline{F_2}(e^{
t_1X_1},e^
{t_2X_2})(\xi_3)+\frac{\partial}{\partial_{t_2}}\big|_{t_2=0}\overline{F_2}(1_G,e^
{t_2X_2})\circ \frac{\partial}{\partial_{t_1}}\Big|_{t_1=0}F_1(e^{
t_1X_1})(\xi_3)\\
&&+c.p.\quad\mbox{by (\ref{temp1})}\\
&=&\nu(X_1,X_2)(\xi_3)+c.p.\\
&=&\widetilde{\nu}(X_1+\xi_1,X_2+\xi_2,X_3+\xi_3)  \quad\mbox{by
(\ref{eqn:nu3})},
\end{eqnarray*}
which completes the proof. \qed
\begin{cor}
If Lie algebra $\frkg$ is semisimple, the Lie group 3-cocycle
$\widetilde{F_2}$ is not exact, i.e. $[\widetilde{F_2}]\neq0$
in $H^3_{sm}(G\ltimes \frkg^*,\mathbb R)$.
\end{cor}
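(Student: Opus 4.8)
The plan is to reduce the statement to Proposition \ref{pro:nondegenerate} via the van Est differentiation map. Recall that for a Lie group $H$ with Lie algebra $\frkh$ and a smooth $H$-module $M$, differentiation of smooth group cochains at the identity defines a morphism of cochain complexes from the smooth group cohomology of $H$ with values in $M$ to the Lie algebra cohomology of $\frkh$ with values in $M$, and therefore induces a well-defined map on cohomology. Applying this to $H=G\ltimes\frkg^*$, whose Lie algebra is $\frkg\oplus\frkg^*$ with the semidirect product bracket, and to $M=\mathbb R$ with the trivial action (as in the quadruple $(G\ltimes\frkg^*,\mathbb R,\Id,\widetilde{F_2})$ of Theorem \ref{thm:main 2}), I obtain
$$
D\colon H^3_{sm}(G\ltimes\frkg^*,\mathbb R)\longrightarrow H^3(\frkg\oplus\frkg^*,\mathbb R).
$$

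The key input is that this differentiation sends $[\widetilde{F_2}]$ to $[\widetilde{\nu}]$. This is exactly the content of the computation already carried out in the proof of Theorem \ref{thm:main 2}: there the alternating third derivative at the identity of $\widetilde{F_2}$, evaluated on the one-parameter subgroups $e^{t_iX_i}$ together with the linear paths $t_i\xi_i$, was shown to reproduce $\widetilde{\nu}(X_1+\xi_1,X_2+\xi_2,X_3+\xi_3)$. Hence $D[\widetilde{F_2}]=[\widetilde{\nu}]$.

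With these two facts the corollary follows by contraposition. Suppose, for contradiction, that $[\widetilde{F_2}]=0$ in $H^3_{sm}(G\ltimes\frkg^*,\mathbb R)$, i.e. that $\widetilde{F_2}$ is an exact group $3$-cocycle. Since $D$ is a morphism of complexes it carries the zero class to the zero class, so $[\widetilde{\nu}]=D[\widetilde{F_2}]=0$ in $H^3(\frkg\oplus\frkg^*,\mathbb R)$. But $\frkg$ is semisimple, and Proposition \ref{pro:nondegenerate} asserts precisely that $\widetilde{\nu}$ is \emph{not} exact, a contradiction. Therefore $[\widetilde{F_2}]\neq 0$.

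The only genuine obstacle is the first ingredient: that differentiation of smooth group cochains commutes with the respective coboundary operators, so that it descends to a well-defined map $D$ on cohomology. This is the standard van Est fact, and in the present situation the required compatibility is already visible in the derivative computation of Theorem \ref{thm:main 2} (following \cite[Lemma 7.3.9]{brylinski}). I would either make this chain-map property explicit, or argue directly on cochains: if $\widetilde{F_2}$ were the group coboundary of a smooth $2$-cochain $\phi$, then its differentiation would be the Lie algebra coboundary of the differentiation of $\phi$, hence an exact Lie algebra $3$-cocycle equal to $\widetilde{\nu}$, once more contradicting Proposition \ref{pro:nondegenerate}.
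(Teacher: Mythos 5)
Your proposal is correct and follows essentially the same route as the paper: both use the fact (established in the proof of Theorem \ref{thm:main 2}) that differentiation carries $\widetilde{F_2}$ to $\widetilde{\nu}$, and then invoke Proposition \ref{pro:nondegenerate} to conclude by contradiction that $[\widetilde{F_2}]\neq 0$. Your additional remarks on the chain-map property of the van Est differentiation make explicit a point the paper leaves implicit, but the argument is the same.
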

\pf By Theorem \ref{thm:main 2}, the differentiation of the Lie
group 3-cocycle $\widetilde{F_2}$ is the Lie algebra 3-cocycle
$\widetilde{\nu}$. We only need to show that when $\frkg$ is
semisimple, the Lie algebra 3-cocycle $\widetilde{\nu}$ is not
exact. This fact is proved in Proposition \ref{pro:nondegenerate}.
\qed

\begin{remark}Since $G\ltimes \g^*$ is a fibration over $G$, the spectral sequence with $E_2^{p,q}=H^p_{sm}(G, H^q_{sm}( \g^*,
  \R))$ calculates the group cohomology $H^3_{sm}(G\ltimes
  \frkg^*,\mathbb R)$. Since $\g^*$ is an abelian group, $H^q_{sm}( \g^*,
  \R)=\wedge^q\g^*$. Thus when $G$ is compact, $ H^p_{sm}(G, H^q_{sm}( \g^*,
  \R))  = (\wedge^q\g^*)^G$ if $p=0$ and 0 otherwise, where
  $(\wedge^q\g^*)^G$ denotes the set of invariant elements of
  $\wedge^q \g^*$  under the coadjoint action of $G$. Thus when $G$ is compact, $H^3_{sm}(G\ltimes \frkg^*,\mathbb R)=(\wedge^3
g^*)^G \neq 0$ because the Cartan 3-form is an element of
$(\wedge^3 g^*)^G$.
\end{remark}

Notice that our 2-cocycle $\overline{F_2}$ is unique only up to
exact terms. Hence by Theorem \ref{thm:special 2 G},  to verify that our
construction is unique up to isomorphism, we need the following
lemma,

\begin{lemma}
If $\overline{F_2}= \dM \alpha$ is exact, then $\widetilde{F_2}=\dM \beta$ is also exact
with
\[\beta((g_1, \xi_1), (g_2, \xi_2))=\alpha(g_1)F_1(g_1)(\xi_2). \]
\end{lemma}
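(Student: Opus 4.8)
The plan is to verify the claimed identity $\widetilde{F_2}=\dM\beta$ by a direct coboundary computation, after first pinning down precisely which two group-cohomology complexes are in play. The cochain $\overline{F_2}$ lives in the complex of smooth cochains on $G$ with values in $\Hom(V_0,V_1)$, where $G$ acts by $g\cdot A=F_1(g)\circ A\circ F_1(g)^{-1}$; hence $\alpha\colon G\to\Hom(V_0,V_1)$ and the hypothesis $\overline{F_2}=\dM\alpha$ reads
\begin{equation}\label{eqn:alpha}
\overline{F_2}(g_1,g_2)=F_1(g_1)\circ\alpha(g_2)\circ F_1(g_1)^{-1}-\alpha(g_1g_2)+\alpha(g_1).
\end{equation}
On the other hand $\widetilde{F_2}$ is a cochain on $\Gamma=G\ltimes V_0$ with values in $V_1$, where $\Gamma$ acts through $\widetilde{F_1}(g,\xi)(m)=F_1(g)(m)$, and $\beta\colon\Gamma^2\to V_1$ is the $2$-cochain $\beta((g_1,\xi_1),(g_2,\xi_2))=\alpha(g_1)\big(F_1(g_1)(\xi_2)\big)$.

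First I would write out $\dM\beta$ on a triple $a_i=(g_i,\xi_i)$, recalling that the multiplication in $\Gamma$ is $a_1a_2=(g_1g_2,\xi_1+F_1(g_1)(\xi_2))$. The four terms unpack as $a_1\cdot\beta(a_2,a_3)=F_1(g_1)\alpha(g_2)F_1(g_2)(\xi_3)$, then $-\beta(a_1a_2,a_3)=-\alpha(g_1g_2)F_1(g_1g_2)(\xi_3)$, then $\beta(a_1,a_2a_3)=\alpha(g_1)F_1(g_1)(\xi_2)+\alpha(g_1)F_1(g_1)F_1(g_2)(\xi_3)$ after expanding the second slot of $a_2a_3$ and using linearity of $\alpha(g_1)$, and finally $-\beta(a_1,a_2)=-\alpha(g_1)F_1(g_1)(\xi_2)$. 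The two terms $\pm\alpha(g_1)F_1(g_1)(\xi_2)$ cancel, and since $F_1$ is a homomorphism $F_1(g_1)F_1(g_2)=F_1(g_1g_2)$, leaving
\begin{equation}\label{eqn:dbeta}
(\dM\beta)(a_1,a_2,a_3)=F_1(g_1)\alpha(g_2)F_1(g_2)(\xi_3)-\alpha(g_1g_2)F_1(g_1g_2)(\xi_3)+\alpha(g_1)F_1(g_1g_2)(\xi_3).
\end{equation}

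It then remains to match \eqref{eqn:dbeta} with $\widetilde{F_2}(a_1,a_2,a_3)=F_2(g_1,g_2)(\xi_3)=\overline{F_2}(g_1,g_2)\big(F_1(g_1g_2)(\xi_3)\big)$, using \eqref{eqn:F2}. Substituting \eqref{eqn:alpha} and applying $\overline{F_2}(g_1,g_2)$ to $F_1(g_1g_2)(\xi_3)$, the only nontrivial simplification is in the first summand, where $F_1(g_1)^{-1}F_1(g_1g_2)=F_1(g_2)$ (again because $F_1$ is a homomorphism), turning it into exactly $F_1(g_1)\alpha(g_2)F_1(g_2)(\xi_3)$; the remaining two summands reproduce the last two terms of \eqref{eqn:dbeta} verbatim, yielding $\dM\beta=\widetilde{F_2}$. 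The computation is essentially routine; the one place demanding care --- and the main potential source of error --- is the bookkeeping of the three distinct actions of $F_1$ (on $V_0$, on $V_1$, and by conjugation on $\Hom(V_0,V_1)$) together with the twisting introduced by the semidirect-product multiplication inside the arguments of $\beta$, so I would fix all module structures and differentials explicitly before computing.
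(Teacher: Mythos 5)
Your proof is correct and follows essentially the same route as the paper: substitute the coboundary formula for $\overline{F_2}$ into the definition of $F_2$, apply it to $\xi_3$, and use that $F_1$ is a homomorphism. You are in fact slightly more complete than the paper's own argument, since you explicitly expand $\dM\beta$ on the semidirect product $G\ltimes V_0$ and check the cancellation of the $\pm\alpha(g_1)F_1(g_1)(\xi_2)$ terms, a step the paper leaves implicit.
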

\begin{proof}
It is a direct calculation. Since $\overline{F_2}= \dM \alpha$, we have
\[\overline{F_2}(g_1, g_2) = F_1 (g_1) \alpha(g_2) F_1(g_1)^{-1} - \alpha(g_1
g_2) + \alpha(g_1). \]
From the definition of $F_2$, we know that
\[ F_2(g_1, g_2)= F_1(g_1) \alpha(g_2) F_1(g_1)^{-1} F_1(g_1 g_2) -
\alpha(g_1 g_2) F_1(g_1 g_2) + \alpha(g_1) F_1(g_1g_2). \]
By \eqref{eq:tf2}, we have
\[
\begin{split}
\widetilde{F_2}((g_1, \xi_1), (g_2, \xi_2),(g_3, \xi_3))= &F_1(g_1) \alpha(g_2)
F_1(g_1)^{-1} F_1(g_1g_2) (\xi_3) - \alpha(g_1g_2) F_1(g_1g_2) (\xi_3)
+ \alpha(g_1) F_1(g_1g_2) (\xi_3) \\
= &\dM \beta((g_1, \xi_1), (g_2, \xi_2),(g_3, \xi_3)),
\end{split}
 \] since $F_1$ is a group homomorphism.
\end{proof}

\begin{remark}
Our Lie 2-group as a stacky group has the underlying differential
stack $G \times \g^* \times B\R$. Thus it is  0,1,2-connected (i.e.
it has $\pi_0=\pi_1=\pi_2=0$) since $\pi_2(B\R)=\pi_1(\R)=0$ and
$\pi_1(B\R)=\pi_0(\R)=0$. Thus it is the unique 0,1,2-connected
stacky Lie group integrating the string Lie 2-algebra $\R
\xrightarrow{0} \g\oplus \g^*$ in the sense of \cite{z:lie2}.
\end{remark}
\bibliographystyle{habbrv}
\bibliography{../../bib/bibz}

\end{document}